\def\makeLineNumberLeft{%
 \linenumberfont\llap{\hb@xt@\linenumberwidth{\LineNumber\hss}\hskip\linenumbersep}
 \hskip\columnwidth
 \rlap{\hskip\linenumbersep\hb@xt@\linenumberwidth{\hss\LineNumber}}\hss}
\newcommand{\mynewtheorem}[2]{
 \newaliascnt{#1}{dummy}
 \newtheorem{#1}[#1]{#2}
 \aliascntresetthe{#1}
 \expandafter\def\csname #1autorefname\endcsname{#2}
}
\newcommand{\mynewnumbered}[2]{
 \newaliascnt{#1}{numbered}
 \newtheorem{#1}[#1]{#2}
 \aliascntresetthe{#1}
 \expandafter\def\csname #1autorefname\endcsname{#2}
}
\theoremstyle{plain}
\theoremstyle{definition}
\newcommand{\K}{\mathbb K}
\newcommand{\Q}{\mathbb Q}
\newcommand{\C}{\mathbb C}
\newcommand{\Z}{\mathbb Z}
\newcommand{\GL}{\mathrm{GL}}
\newcommand{\PGL}{\mathrm{PGL}}
\newcommand{\Hom}{\mathrm{Hom}}
\newcommand{\Exel}[1]{\mathcal{S}(#1)}
\newcommand{\pSym}{\mathcal{I}}
\newcommand{\gen}[1]{\langle #1 \rangle}
\newcommand{\geni}[1]{\langle #1 \rangle_\mathcal{S}}
\newcommand{\inv}[1]{{#1^{-1}}}
\newcommand{\rk}{\mathrm{rk}}
\begin{document}
\title{Schur's theory for partial projective representations}
\author{Mikhailo Dokuchaev and Nicola Sambonet\thanks{
The first author was partially supported by Fapesp of Brazil and by CNPq of Brazil.
The second author was supported by Fapesp of Brazil.
Address: Instituto de Matem\' atica e Estat\' istica da Universidade de S\~ao Paulo, Rua do Mat\~ao 1010, 05508-090 S\~ao Paulo, Brazil.
Email: dokucha@gmail.com and nsambonet@gmail.com
}}

\maketitle

\begin{abstract}
 This article focuses on those aspects about partial actions of groups which are related to Schur's theory on projective representations.
 It provides an exhaustive description of the partial Schur multiplier, and this result is achieved by introducing the concept of a second partial cohomology group relative to an ideal, together with an appropriate analogue of a central extension. In addition, the new framework is proved to be consistent with the earlier notion of cohomology over partial modules.
\end{abstract}

\section{Introduction}
It is becoming relatively common to encounter the word \emph{partial} as a prefix for some classical terminology in algebra.
This phenomenon is due to the increasing awareness about the algebraic relevance of partial symmetries which explicitly or implicitly appear in various local-global aspects of  well-known mathematical topics.
It was indeed the theory of $C^\ast$-algebras which motivated the introduction of the notions of partial actions and partial representations of groups \cite{MR1469405}.
In this regard, the most prominent application of the partial actions is their involvement in the construction of a crossed product, which encloses relevant classes of algebras  \cite{MR2799098}. Partial representations are intimately related to partial actions in various ways, in particular, they play a fundamental role in providing partial crossed pro\-duct structures on concrete algebras. This can be seen applied to a series of important algebras, among the most recent examples being $C^*$-algebras related  to dynamical systems of type $(m,n)$  \cite{MR3103084}, the Carlsen-Matsumoto algebras of subshifts  \cite{MR3639522} and  algebras related to separated graphs  \cite{MR3144248}. In the latter paper, a partial-global passage was remarkably employed  to a problem on paradoxical decompositions.

The general concept of a twisted partial group action, used to define more general crossed products, involves a kind of a $2$-cocycle (see \cite{MR2450727,MR1425329}), raising the  desire to fit it into some cohomology theory.
In order to obtain some testing material for the development of such a theory, partial projective group representations were introduced and studied in \cite{MR2559695,MR2835207,MR3085031,MR3447601}.
The notion of a partial multiplier naturally appears in analogy with the classical Schur's theory of projective representations.
Since the algebraic structures involved in  partial representations are not only groups but also inverse semigroups, the new theory diverges from the original one and the multiplier results in a semilattice of abelian groups, called components.
That is briefly to say, a collection composed of abelian groups and parametrized by a meet semilattice, where the operations of group-multiplications are compatible with the partial order. 
Subsequently, a cohomology theory based on partial actions was developed in  \cite{MR3312299,DKh2,DKh3}.
Notice that partial cohomology turned out to be  useful to deal with the ideal separation property  of  (global) reduced $C^*$-crossed products   in \cite{KennedySchafhauser}, where partial $2$-cocyles  appear 
as factor sets (twists) of  partial projective representations (called twisted partial representations in \cite{KennedySchafhauser}) naturally related to (global) $C^*$-dynamical systems.

With respect to  the partial Schur multiplier, it was shown in \cite[Theorem 2.14]{MR3312299} that each component  is a union of cohomology groups with values in non-necessarily trivial partial modules. Furthermore, in the case of an algebraically closed field  $\K$, it was proved in \cite[Theorem 5.9]{MR3085031} that each component  of the partial Schur multiplier is an epimorphic image of a direct product of copies of  $\K^\times.$ However, computations of the  partial Schur multiplier of concrete groups show that each component is, in fact,  isomorphic to a  direct product of copies of  $\K^\times$ (see \cite{MR3492987,MR3085031,MR3169719,Pinedo2014,MR3447601,MR3562686}), raising the conjecture that this should be true for all  groups.

The present article is inspired by the aforementioned conjecture, of which it provides an affirmative answer for finite groups   (\autoref{Cor:partial multiplier}), and it actuates an essential refinement for our understanding  of partial projective group representations.
To this aim, the theory is reformed to encompass a broader setting, specifically to allow arbitrary coefficients and to establish a partial analogue for Schur's theory of central extensions (in particular, see \autoref{Thm:partial cohomology and extensions}).
Influenced by known  properties of partial factor sets, the concepts of  a \emph{pre-cohomology group}, of  a \emph{second partial cohomology group relative to an ideal}, and of a \emph{second partial cohomology semilattice of groups} with values in an arbitrary abelian group are introduced.
Then the partial Schur multiplier is shown  to coincide with  the second  partial cohomology semilattice of groups with values in   $\K^\times$.
This new level of abstraction conveys standard techniques into the scene, which promptly produce new theorems.

Most remarkably, in the finite case the pre-cohomology groups are completely determined by minor information about the underlying group (\autoref{Thm:precohomology group}).
Also, taking the ideals into accounts, a general statement is established for the second partial cohomology groups (\autoref{Thm:precohomology group and ideals}).
Furthermore, considering  appropriate partial $G$-modules,  one is able to conclude, in particular, that each component of the partial Schur multiplier is isomorphic to a single partial cohomology group (see \autoref{Thm:coherence with the previous notion}) in the sense of    \cite{MR3312299},  bringing thus the partial Schur theory closer to the classical one.

The text is structured as follows:
in \autoref{Section:prerequisites} some fundamental concepts in the theory of partial actions of groups and in Schur's theory will be recalled;
in \autoref{Section:analogies}  new concepts will be introduced, and the partial analogue for Schur's theory with arbitrary coefficients will be developed;
in \autoref{Section:pre-cohomology} and, respectively, in \autoref{Section:partial cohomology} the pre-cohomology groups and the second partial cohomology semilattice of groups will be studied in details;
lastly, in \autoref{Section:coherence} the consistency with the previous notion of partial cohomology will be proved.

\section{Prerequisites}\label{Section:prerequisites}
\subsection{The universal inverse semigroup of a group}
Inverse semigroups are the algebraic structures which capture the idea of partial symmetry \cite{MR0466355}.
In an \emph{inverse semigroup} $S$ the system of equations
\begin{equation}\label{Eq:inverse semigroup}
 xyx=x\ \wedge \ yxy=y
\end{equation}
admits a unique solution for $y$ in terms of $x$, commonly denoted by $y=\inv{x}$.
This is easily recognized to be a generalization of groups, still it incorporates the semilattices, which are the semigroups consisting of commuting idempotents.
A \emph{partial bijection} over a set $X$ is any bijection between subsets of $X$.
Multiplication of partial bijection is defined as follows:
if $\alpha:A\to A^\alpha$ and $\beta:B\to B^\beta$, then $\beta \circ \alpha  $ is their composition on the largest subset where it makes sense, i.~e.~$\inv{\alpha}(A^\alpha\cap B)$.
Necessarily, one has to admit the existence of the map $\infty:\emptyset\to\emptyset$, and this serves as the zero element.
As the reader has noticed, we convene to denote with $\infty$ the zero element of a semigroup, if this ever exists, coherently with the additive notation for abelian groups.
The set of partial bijections over $X$ turns out to be an inverse semigroup in the obvious way, that is by inversion of functions, which is named the \emph{symmetric inverse semigroup} $\pSym(X)$.

We recall that, given an ideal $I$ of a semigroup $S$, the \emph{Rees congruence} is defined by setting $x\sim y$ when either $x=y$ or both $x$ and $y$ belong to $I$. The corresponding quotient is denoted by $S/I$. In general, if $\vartheta:S\to T$ is a homomorphism of semigroups and $T$ admits the zero $\infty$, then $\inv{\vartheta}(\infty)$ is an ideal of $S$.

Inverse semigroups apply to the theory of groups through the notion of partial actions and partial homomorphisms of groups \cite{MR1469405}.
A \emph{partial action} is a map $\psi:G\to\pSym(X)$ satisfying the compatibility conditions that $\psi(1)=\mathrm{id}_X$ and the composition $\psi(g)\circ\psi(h)$ coincides with the restriction of $\psi(gh)$.
In turn, this notion results to be a special case of the following.
A \emph{partial homomorphism} is a map $\psi$ from a group $G$ into a semigroup $M$ satisfying the conditions
\begin{align}
&\psi(1)\cdot\psi(g)=\psi(g)=\psi(g)\cdot\psi(1)\label{Eq:partial homomorphism 1}\\
&\psi(\inv{g})\cdot\psi(g)\cdot\psi(h)=\psi(\inv{g})\cdot\psi(gh)\label{Eq:partial homomorphism 2}\\
&\psi(g)\cdot\psi(h)\cdot\psi(\inv{h})=\psi(gh)\cdot\psi(\inv{h})\label{Eq:partial homomorphism 3}
\end{align}
for any pair of elements $g$ and $h$ of $G$.
These relations, which openly emerge from the compatibility requirement of partial action, are used to introduce the \emph{universal inverse semigroup} $\Exel{G}$ \emph{associated with} $G$ (see \cite{MR1469405}). Precisely, $\Exel{G}$ is the abstract semigroup generated by symbols $\Pi(g)$ subject to relations analogue to \eqref{Eq:partial homomorphism 1}-\eqref{Eq:partial homomorphism 3}, so that the map $\Pi:G\to\Exel{G}$ is an injective partial homomorphism, and this construction is universal in sense that
\begin{equation}\label{Eq:Exel universal}
 \xymatrix{&\Exel{G}\ar@{.>}[d]^{\bar\psi}\\G\ar[ur]^\Pi\ar[r]^(.55)\psi&M}
\end{equation}
every partial homomorphism $\psi$ extends to a semigroup homomorphism $\bar\psi$ uniquely by means of $\Pi$.

In fact, $\Exel{G}$ is isomorphic to the Birget-Rhodes expansion of $G$  \cite{MR2041539} (see also \cite{MR745358,MR989900}).
Thus, a generic element of $\Exel{G}$ is identified with a pair $(R,g)$ where $R$ is a finite subset of $G$ containing $1$ and $g$,
whereas multiplication obeys to the rule
\[(R,g)\cdot(S,h)=(R\cup gS,gh)\ .\]
The result is an inverse semigroup since the system of equations \eqref{Eq:inverse semigroup} admits the unique solution
$\inv{(R,g)}=(\inv{g}R,\inv{g})$, and the universal partial homomorphism $\Pi$ associates an element $g$ of the group with $\Pi(g)=(\{1,g\},g)$.
It is convenient to extend the definition of $\Pi$ to a higher number of elements as $\Pi(g_1,\ldots,g_k)=\Pi(g_1)\cdots\Pi(g_k)$.
In particular, considering two elements yields
\begin{equation}\label{Eq:product of two generators}
 \Pi(g,h)=(\{1,g\},g)\cdot(\{1,h\},h)=(\{1,g,gh\},gh)\ ,
\end{equation}
whereas considering three elements yields
\begin{equation}\label{Eq:product of three generators}
 \Pi(x,y,z)=(\{1,x\},x)\cdot(\{1,y\},y)\cdot(\{1,z\},z)=(\{1,x,xy,xyz\},xyz)\ .
\end{equation}
One may notice how these relations confirm that $\Pi$ satisfies \eqref{Eq:partial homomorphism 1}-\eqref{Eq:partial homomorphism 3}.

It is important to describe the ideals of $\Exel{G}$ since these determine the vanishing of a partial homomorphism.
Precisely, in the notation of \eqref{Eq:Exel universal} the equality $\psi(g_1)\cdots\psi(g_k)=\infty$ occurs if and only if $\Pi(g_1,\ldots,g_k)$ belongs to $I=\inv{\bar\psi}(\infty)$.
To begin with, the principal ideals in $\Exel{G}$, which we will denote by $\geni{\cdot}$, can be characterized as follows \cite[Corollary 2]{MR2835207}:
\begin{equation}\label{Eq:principal ideals}
 \geni{(R,g)}=\geni{(S,h)}\iff(S,h)=(\inv{x}R,\inv{x}y)\mbox{ for some } x,y\in R\ .
\end{equation}
For our purpose, one may just consider the following relations which are direct consequences of the definition of a partial representation:
\begin{equation}\label{Eq:generation of ideals inverses}
 \geni{\Pi(g,\inv{g})}=\geni{\Pi(g)}=\geni{\Pi(\inv{g})}=\geni{\Pi(\inv{g},g)}\ ,
\end{equation}
and more generally, for any $1\leq j\leq r$ and $1\leq k\leq r$,
\begin{equation}\label{Eq:ideals and associativity}
	\begin{array}{rl}
	 \geni{\Pi(g_{1},\ldots,g_{r})}&=\geni{\Pi(\inv{(g_1\cdots g_j)},g_1,\ldots,g_r)}\\
   &=\geni{\Pi(g_1,\ldots,g_r,\inv{(g_{k}\cdots g_r)})}\\
   &=\geni{\Pi(\inv{(g_1\cdots g_j)},g_1,\ldots,g_r,\inv{(g_{k}\cdots g_r)})}\ .
	\end{array}
\end{equation}
In addition, with respect to \eqref{Eq:product of two generators}, taking $y=1$ and either $x=g$ or $x=gh$ in the above characterization \eqref{Eq:principal ideals} yields
\begin{equation}\label{Eq:generation of ideals}
 \geni{\Pi(g,h)}=\geni{\Pi(h,\inv{h}\inv{g})}=\geni{\Pi(\inv{h},\inv{g})}\ .
\end{equation}
These identities prove that, given a partial homomorphism $\psi$, the subset of $G\times G$ consisting of those pairs $(g,h)$ for which $\psi(g)\cdot\psi(h)=\infty$ is invariant under the action of the symmetric group $S_3$ generated by the transformations
\begin{equation}\label{Eq:S3 action}
 (g,h)\mapsto(h,\inv{h}\inv{g})\ ,\ (g,h)\mapsto(\inv{h},\inv{g})\ .
\end{equation}
This action, which has been discovered in \cite{MR2559695}, will be central in the present essay.

Another family of ideals can be described by the simple observation that, for any pair of elements $(R,g)$ and $(S,h)$ of $\Exel{G}$, since $|R\cup gS|\geq|S|$ and $|R\cup gS|\geq|R|$, left and right multiplication are ``non-decreasing operations".
Consequently, for any positive integer $k$, the set
\[N_k=\{(R,g)\in\Exel{G}\ |\ |R|\geq k+1\}\]
is an ideal of $\Exel{G}$.

In the present essay the crucial case is when $k=3$. Indeed, in the notation of \eqref{Eq:Exel universal}, for any partial homomrphisms $\psi$ considered hereby it will be assumed that the ideal $I=\inv{\bar\psi}(\infty)$ is proper in $\Exel{G}$ and it contains $N_3$.
This assumption is motivated by the theory on partial projective representations where the semilattice
\begin{equation}\label{Eq:Lambda}
 \Lambda=(\Lambda,\cup)\ \ ,\ \ \Lambda=\{I\ |\ N_3\leq I\triangleleft\Exel{G}\}
\end{equation}
plays a prominent role (see \cite{MR3085031}).

Taking this into account, suppose that $\Pi(x,y,z)$ is not contained in $I$, for $I=\inv{\bar\psi}(\infty)$ lying in $\Lambda$.
Accordingly with \eqref{Eq:product of three generators}, one has $|\{1,x,xy,xyz\}|\leq 3$, which can be rewritten as
\begin{equation}\label{Eq:1 notin xyzxyyzxyz}
 1\in\{x,y,z,xy,yz,xyz\}\ .
\end{equation}
Therefore, the failure on this condition implies $\psi(x)\cdot\psi(y)\cdot\psi(z)=\infty$.

\subsection{A sketch of classical Schur's theory}
The ordinary multiplier is the fundamental object in the theory of projective representations and central extensions \cite{MR0460423,MR1200015,Schur1904}.
Hereby, few of the very elementary concepts are mentioned in order to fix the notation and facilitate the description of their partial analogue.

A \emph{central extension} of a group $M$ is a group $E$ endowed with a surjective homomorphism $\pi:E\to M$ such that $A=\ker\pi$ is contained in the center $Z(P)$.
This situation is summarized in the diagram
\begin{equation}\label{Eq:central extension}
 \xymatrix{A\ar[r]^\iota&E\ar[r]^\pi&M}
\end{equation}
The interest is to consider such a central extension together with a group $G$ and a homomorphism $\psi:G\to M$.
The archetypes are:
\begin{enumerate}[i.]
 \item When $M=G$ and $\psi$ is the identity, then $E$ is a \emph{central extension} of $G$.
 \item When $E=\GL(n,\C)$ and $M=\PGL(n,\C)$ for some positive integer $n$, then $\psi$ is a \emph{projective representation} of $G$.
\end{enumerate}
Hence, the choice of a section $\varphi:G\to E$ completes the commutative diagram:
\begin{equation}\label{Eq:diagram}
\xymatrix{
&&G\ar[d]^\psi\ar@{.>}[dl]_\varphi\\
A\ar[r]^\iota&E\ar[r]^\pi&M
}\end{equation}
One can not expect that the map $\varphi$ is a homomorphism.
However, the failure for this to happen is encoded in the equation
\begin{equation}\label{Eq:varphi and sigma}
 \varphi(g)\cdot\varphi(h)=\varphi(gh)\cdot\sigma(g,h)
\end{equation}
where $\sigma$ is a function in $A^{G\times G}$.
Clearly, not any function is admissible: adopting the additive notation for group multiplication in $A$, associativity of $E$ proves that $\sigma$ satisfies the equality
\[\sigma(x,y)+\sigma(xy,z)=\sigma(x,yz)+\sigma(y,z)\]
for any $x,y$ and $z$ in $G$. In connection with cohomology this fact is restated as follows.
The coboundary homomorphism
\[\delta^2:A^{G^{\times 2}}\to A^{G^{\times 3}}\]
is defined by means of the relation
\begin{equation}\label{Eq:defining cocycles}
 \delta^2\sigma(x,y,z)=\sigma(y,z)-\sigma(xy,z)+\sigma(x,yz)-\sigma(x,y)\ ,
\end{equation}
and the \emph{group of cocycles} is the kernel of this homomorphism
\[Z^2(G,A)=\{\sigma\in A^{G\times G}\ |\ \delta^2\sigma=0\}\ .\]
Thus, the function $\sigma$ determined by \eqref{Eq:varphi and sigma} is an element of $Z^2(G,A)$. 
Luckily enough the process is reversible and any $\sigma\in Z^2(G,A)$ defines a central extension \eqref{Eq:central extension} of the group $G$ as follows. The underlying set is $A\times G$ and multiplication is given by
\begin{equation}\label{Eq:central extension from cocycle}
 (\lambda,g)\cdot(\mu,h)=(\lambda+\mu+\sigma(g,h),gh)\ .
\end{equation}
Therefore, there is a correspondence between the central extensions and the cocycles of $G$ which, however, depends on the choice of section.
Nonetheless, any change of section, as it can be seen easily, corresponds to addition of a coboundary
\begin{equation}\label{Eq:defining coboundaries}
 \delta^1\zeta(g,h)=\zeta(h)-\zeta(gh)+\zeta(g)\ .
\end{equation}
Hence, it is of interest to introduce the group
\[B^2(G,A)=\{\delta^1\zeta\ |\ \zeta\in A^G\}\ ,\]
and to consider the \emph{second cohomology group}
\[H^2(G,A)=Z^2(G,A)/B^2(G,A)\ .\]
The terminology \emph{Schur multiplier} applies to the case when $G$ is a finite group and $A=\C^\times$,
which answers to the original problem of classifying the projective representations.
For general coefficients, the facts here discussed provide a one-to-one correspondence between the isomorphism classes of the central extensions of $G$ by $A$ and the elements of $H^2(G,A)$

Of course, the above definition is a special case in the theory of group cohomology (see \cite{MR672956}).
Indeed, in the present manuscript it is always assumed that the (global) group actions over the coefficient modules are trivial.
In view of this, the \emph{first cohomology group} simply is
\[H^1(G,A)=\ker\delta^1=\Hom(G,A)\ .\]

\section{The partial analogue notions}\label{Section:analogies}
Also the partial counterpart of the theory originates with the study of partial projective representations in $\K^\times$-cancellative monoids \cite{MR2559695,MR2835207,MR3085031} (previously named $\K$-cancellative).
However, the most general situation can be extended to semigroups and partial homomorphisms.

Firstly, referring to the diagram \eqref{Eq:central extension}, in the current framework $E$ will be taken as an $A$-cancellative central extension of a semigroup $M$ by an abelian group $A$, in the following sense.
A \emph{central extension} is a semigroup $E$ which contains $A$ as a central subgroup, and $M$ is the quotient semigroup $E/A$ with respect to the following congruence: $x\sim y$ whenever there exist $a$ and $b$ in $A$ such that $ax=by$.
Given a central extension $E$ endowed with zero $\infty$, this extension is called $A$-\emph{cancellative} if, for every $a,b$ in $A$ and every $x$ in $E\setminus\{\infty\}$, the equality $ax=bx$ implies $a=b$.
Observe that, this condition yields that, for every $a$ in $A$, the equation $ax=\infty$ has no solution for $x$ in $E$ other than $x=\infty$.

In second place, the due modification in the diagram \eqref{Eq:diagram} is given by  an $A$-cancellative central  extension of $M$ together with a group $G$ and a partial homomorphism $\psi:G\to M$.
Notice that, when $\infty\in E$, then $M$ has at least two elements.
In this case, it is convenient to assume that $\psi(1)\neq\infty$ since, otherwise, $\psi(g)=\infty$ for all $g$.
With respect to \eqref{Eq:partial homomorphism 2}-\eqref{Eq:partial homomorphism 3} and \eqref{Eq:varphi and sigma}, \emph{this time} the failure for the section $\varphi$ to be a \emph{partial} homomorphism can be shown to be encoded in the equations
\begin{align}
 &\varphi(\inv{g})\cdot\varphi(g)\cdot\varphi(h)=\varphi(\inv{g})\cdot\varphi(gh)\cdot\sigma(g,h)\label{Eq:defining precocycles}\\
 &\varphi(g)\cdot\varphi(h)\cdot\varphi(\inv{h})=\varphi(gh)\cdot\varphi(\inv{h})\cdot\sigma(g,h)\label{Eq:defining precocycles right}
\end{align}
where $\sigma(g,h)$ is an element of $A$.
Actually, it is convenient to regard $\sigma(g,h)$ as an element of the semigroup $A_\infty=A\cup\{\infty\}$, obtained adjoining the zero $\infty$ to the abelian group $A$,  and set $\sigma(g,h)=\infty$ whenever the above equations reduce to the triviality $\infty=\infty$.

Similarly with the case of a partial projective representation over a field, the fact that the same function $\sigma$ appears in both \eqref{Eq:defining precocycles} and \eqref{Eq:defining precocycles right} is not obvious, but the arguments given in the proof of \cite[Theorem 3]{MR2559695} can be easily adapted to this case.
Moreover, also in the current situation the map $\sigma$ has not to satisfy the cocycle identity \cite[page 260]{MR2559695}.
However, by means of \eqref{Eq:ideals and associativity}, associativity of $E$ applied to the relation $\varphi(\inv{x})\cdot\varphi(x)\cdot\varphi(y)\cdot\varphi(z)\cdot\varphi(\inv{z})$
shows that $\delta^2\sigma(x,y,z)=0$ whenever $\Pi(x,y,z)$ does not belong to the ideal $I=\inv{\psi}(\infty)$.
Accordingly to \eqref{Eq:1 notin xyzxyyzxyz}, it is of interest to introduce the following:
\begin{definition}\label{Def:pre-cohomology}
 A \emph{pre-cocycle} of $G$ with values in $A$ is a function  $\sigma$ in $A^{G\times G}$ satisfying the weak cocycle condition:
 \[1\in\{x,y,z,xy,yz,xyz\}\Rightarrow\delta^2\sigma(x,y,z)=0\ .\]
 The group of pre-cocycles is denoted by $pZ^2(G,A)$, and
 \[pH^2(G,A)=pZ^2(G,A)/B^2(G,A)\]
 is the \emph{pre-cohomology group of} $G$.
\end{definition}
The zero $\infty$, which does not appear in the above definition, is reintroduced as follows.
Any fixed ideal $I$ in the semilattice $\Lambda$, introduced in \eqref{Eq:Lambda}, defines an additive characteristic function $\epsilon_I:G\times G\to A_\infty$ obeying to
 \[\epsilon_I(g,h)=\left\{\begin{array}{cl}\infty&\Pi(g,h)\in I\\0&\mbox{otherwise.}\end{array}\right.\]
In these terms, the map $A^{G\times G}\to A_{\infty}^{G\times G}$ associating $\alpha\mapsto\alpha+\epsilon_I$ is a homomorphism of semigroups.
\begin{definition}\label{Def:partial cohomology group}
In the current notation, define
\[Z^2(G,I;A)=pZ^2(G,A)+\epsilon_I\ \ ,\ \ B^2(G,I;A)=B^2(G,A)+\epsilon_I\ .\]
Then, the quotient \[H^2(G,I;A)=Z^2(G,I;A)/B^2(G,I;A)\ ,\]
is the \emph{second partial cohomology group relative to} $I$ with coefficients in $A$.
\end{definition}
We shall see below (\autoref{Thm:coherence with the previous notion}) that this definition is coherent with the notion of partial cohomology groups given in \cite{MR3312299}. Moreover, the fact \[Z^2(G,N_3;A)=pZ^2(G,A)\] justifies that only ideals containing $N_3$ are considered.

Clearly, the identity element of $H^2(G,I;A)$ is the class $[\epsilon_I]$.
In addition, in $A_\infty^{G\times G}$ one has $\epsilon_I+\epsilon_J=\epsilon_{I\cup J}$. Thus, by allowing external sums, that is $[\sigma+\epsilon_I]+[\tau+\epsilon_J]=[\sigma+\tau+\epsilon_{I\cup J}]$,
it is possible to join all of these groups as follows:
\begin{definition}\label{Def:partial semilattice}
The \emph{second partial cohomology semilattice of groups of} $G$ is
\[H^2(G,\Lambda;A)=\coprod_{I\in\Lambda}H^2(G,I;A)\]
\end{definition}
The semigroup of idempotents in $H^2(G,\Lambda;A)$ consists of the classes $[\epsilon_I]$ as $I$ varies in $\Lambda$, and it is canonically isomorphic with $(\Lambda,\cup)$.
Furthermore, the definition of $Z^2(G,-;A)$ and $B^2(G,-;A)$ immediately yields:
\begin{proposition}
 For any pair of ideals $I$ and $J$ in $\Lambda$, the homomorphism $H^2(G,J;A)\to H^2(G,I\cup J;A)$ mapping $\mu\mapsto\mu+[\epsilon_I]$ is surjective.
\end{proposition}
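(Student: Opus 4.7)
The plan is to observe that the claim follows almost directly from unpacking the definitions, with the only substantive ingredient being the identity $\epsilon_I+\epsilon_J=\epsilon_{I\cup J}$ in $A_\infty^{G\times G}$.

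First I would verify that identity. For any pair $(g,h)\in G\times G$, we have $\Pi(g,h)\in I\cup J$ if and only if $\Pi(g,h)\in I$ or $\Pi(g,h)\in J$. Since $\infty$ is absorbing in $A_\infty$, this is equivalent to $\epsilon_I(g,h)+\epsilon_J(g,h)=\infty$, and when neither condition holds both summands are $0$. Hence $\epsilon_I+\epsilon_J=\epsilon_{I\cup J}$, which among other things confirms that the prescription $\mu\mapsto\mu+[\epsilon_I]$ sends the component $H^2(G,J;A)$ into the component $H^2(G,I\cup J;A)$, and that this assignment is compatible with the semilattice structure introduced in \autoref{Def:partial semilattice}.

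Next, I would check surjectivity by exhibiting an explicit preimage. Given $\nu\in H^2(G,I\cup J;A)$, by definition of $Z^2(G,I\cup J;A)$ we may write $\nu=[\tau+\epsilon_{I\cup J}]$ for some pre-cocycle $\tau\in pZ^2(G,A)$. Set $\mu=[\tau+\epsilon_J]\in H^2(G,J;A)$. Then, using the identity from the previous paragraph,
\[
\mu+[\epsilon_I]=[\tau+\epsilon_J+\epsilon_I]=[\tau+\epsilon_{I\cup J}]=\nu,
\]
so $\mu$ is a preimage of $\nu$, and the homomorphism is surjective.

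The only residual point is well-definedness of the map on cohomology classes, to make sure the argument does not depend on the choice of representative $\tau$: if $\tau+\epsilon_J$ and $\tau'+\epsilon_J$ differ by an element of $B^2(G,J;A)=B^2(G,A)+\epsilon_J$, then after adding $\epsilon_I$ they differ by an element of $B^2(G,A)+\epsilon_{I\cup J}=B^2(G,I\cup J;A)$. In short, there is no real obstacle: the proposition is essentially a tautology once the absorbing behavior of the characteristic functions $\epsilon_I$ is recorded, and the whole proof can be dispatched in a few lines.
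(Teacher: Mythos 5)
Your proof is correct and follows essentially the same route as the paper, which states that the proposition follows immediately from the definitions of $Z^2(G,-;A)$ and $B^2(G,-;A)$ together with the identity $\epsilon_I+\epsilon_J=\epsilon_{I\cup J}$; you have simply written out the details the paper leaves implicit. Nothing is missing.
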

From the above discussion, it follows that a map $\sigma$ defined by \eqref{Eq:defining precocycles}-\eqref{Eq:defining precocycles right} is an element of $Z^2(G,I;A)$ for $I=\inv{\psi(\infty)}$.
The following construction, which is analogue to \eqref{Eq:central extension from cocycle}, provides the converse of this statement. Namely, for any ideal $I$ in $\Lambda$, every element of $Z^2(G,I;A)$ can be obtained as a map $\sigma$ arising from an $A$-cancellative central extension together with a partial homomorphism, as in \eqref{Eq:defining precocycles}-\eqref{Eq:defining precocycles right}.
\begin{lemma}\label{Lem:extension of E3 by a precocycle}
Given $\sigma$ in $Z^2(G,I;A)$, then the set $E=\left(A\times\Exel{G}\setminus I\right)\cup\{\infty\}$ endowed with multiplication
\[(\lambda,R,x)\cdot(\mu,S,y)
=\left\{\begin{array}{cl}
        (\lambda+\mu+\sigma(x,y),R\cup xS,xy)&\mbox{ if }(R\cup xS,xy)\notin I\\
        \infty&\mbox{ otherwise}
        \end{array}\right.\]
 results in an $A$-cancellative central extension of $\Exel{G}/I$.
 Moreover, $E$ is a monoid with identity element $(-\sigma(1,1),\Pi(1))$, and the natural partial homomorphism $G\to\Exel{G}/I$ admits the section 
\[\varphi:G\to E\ ,\ g\mapsto\left\{\begin{array}{cl}
        (-\sigma(1,1),\Pi(g))&\mbox{ if }\Pi(g)\notin I\\
        \infty&\mbox{ otherwise}
        \end{array}\right.\]
which is associated with $\sigma$. 
\end{lemma}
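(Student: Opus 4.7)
The verification naturally decomposes into four stages which I would handle in order: (a) well-definedness of the multiplication, including the fact that $\infty$ behaves as an absorbing zero; (b) associativity, which is the technical heart of the proof; (c) the central extension structure, $A$-cancellativity, and the identification $E/A\cong\Exel{G}/I$; (d) that $\varphi$ is a section associated with $\sigma$.

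For stage (a) the only subtle point is to ensure that whenever the formula produces a pair $(R\cup xS,xy)\notin I$, the coefficient $\sigma(x,y)$ genuinely lies in $A$ rather than at $\infty$. The factorization $(R\cup xS,xy)=(R,1)\cdot\Pi(x,y)\cdot(\inv{y}S,1)$ places this element in the principal ideal generated by $\Pi(x,y)$, so $\Pi(x,y)\notin I$, and since $\sigma=\sigma_0+\epsilon_I$ with $\sigma_0\in pZ^2(G,A)$, we get $\sigma(x,y)=\sigma_0(x,y)\in A$. Stage (b) then proceeds as follows: associativity of $\Exel{G}$ ensures that both sides of the associator equation produce the same $\Exel{G}$-component $(R\cup xS\cup xyT,xyz)$. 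If this pair lies in $I$, the ideal property forces the intermediate products $(R\cup xS,xy)$ and $(S\cup yT,yz)$ into $I$ as well, so both sides collapse to $\infty$. Otherwise, the same inclusion argument as in (a) gives $\Pi(x,y,z)\notin I\supseteq N_3$, so $|\{1,x,xy,xyz\}|\leq 3$ and by \eqref{Eq:1 notin xyzxyyzxyz} one has $1\in\{x,y,z,xy,yz,xyz\}$; using the relations \eqref{Eq:ideals and associativity}, neither $\Pi(xy,z)$ nor $\Pi(x,yz)$ belongs to $I$, so all four relevant values of $\sigma$ lie in $A$, and the pre-cocycle condition delivers the identity $\sigma(x,y)+\sigma(xy,z)=\sigma(y,z)+\sigma(x,yz)$, which is exactly what must hold for the $A$-components to match.

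Stage (c) starts by specializing the pre-cocycle condition to the triples $(1,1,x)$ and $(x,1,1)$ to obtain the normalizations $\sigma(1,x)=\sigma(x,1)=\sigma(1,1)$ for every $x\in G$. With these identities in hand, a direct substitution confirms that $(-\sigma(1,1),\Pi(1))$ is a two-sided identity; the map $a\mapsto(a-\sigma(1,1),\Pi(1))$ embeds $A$ as a subgroup of $E$ which the same normalization shows to be central, and $A$-cancellativity is immediate from the additive structure in the first coordinate. The Rees-type quotient $E/A$ collapses each fiber $\{(a,R,g):a\in A\}$ to a point, yielding $(\Exel{G}\setminus I)\cup\{\infty\}=\Exel{G}/I$. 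Finally, for stage (d), $\varphi$ is clearly a set-theoretic section of $E\to \Exel{G}/I$ over the natural map $g\mapsto[\Pi(g)]$; substituting it into \eqref{Eq:defining precocycles}-\eqref{Eq:defining precocycles right} and simplifying with the normalizations above yields the association with $\sigma$.

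The main obstacle is stage (b). It must carefully interweave three pieces: the ideal structure of $I$, the pre-cocycle condition on $\sigma$, and the crucial combinatorial fact \eqref{Eq:1 notin xyzxyyzxyz}, which in turn relies on $N_3\subseteq I$. Without this containment, the key implication $\Pi(x,y,z)\notin I\Rightarrow 1\in\{x,y,z,xy,yz,xyz\}$ would fail and the pre-cocycle identity could not be invoked, making the construction collapse.
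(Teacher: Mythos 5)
Your proposal follows the same route as the paper's proof: the heart is associativity, reduced via $N_3\subseteq I$ to the combinatorial condition $1\in\{x,y,z,xy,yz,xyz\}$ and then to the pre-cocycle identity for $\sigma=\sigma'+\epsilon_I$; you are in fact more careful than the paper on the routine checks (finiteness of $\sigma(x,y)$ when the product survives, the normalization $\sigma(g,1)=\sigma(1,g)=\sigma(1,1)$, centrality, cancellativity, and the identification of the intermediate components via \eqref{Eq:ideals and associativity}). One slip in stage (b): from $(R\cup xS\cup xyT,xyz)\in I$ you cannot conclude that the intermediate products $(R\cup xS,xy)$ and $(S\cup yT,yz)$ lie in $I$ --- the ideal property runs the other way (a product lands in $I$ as soon as one factor is in $I$, not conversely; e.g.\ in $\Z_4$ the triple product of three copies of $\Pi(1)$ lies in $N_3$ while the double product does not). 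The conclusion you want is nevertheless immediate without that claim: whichever way the triple product is associated, either an intermediate product already gives $\infty$, or the final component is $(R\cup xS\cup xyT,xyz)\in I$ and the last multiplication gives $\infty$; so both sides agree.
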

\begin{proof}
To prove the associativity of $E$, given $X=(R,x)$, $Y=(S,y)$ and $Z=(T,z)$ in $\Exel{G}\setminus I$ together with $\lambda$, $\mu$ and $\nu$ in $A$, one considers the two ways for computing the product $(\lambda,X)\cdot(\mu,Y)\cdot(\nu,Z)$.
There is no loss of generality assuming that $XYZ$ does not belong to $I$, since otherwise the product results $\infty$.
In particular, associativity corresponds to the cocycle identity $\delta^2\sigma(x,y,z)=0$ under this condition.
Write $XYZ=(R\cup xS\cup xyT,xyz)$.
Since $\{1,x,xy,xyz\}$ is contained in $R\cup xS\cup xyT$ and $N_3$ is contained in $I$, 
it follows that $|\{1,x,xy,xyz\}|\leq 3$, that is $1\in\{x,y,z,xy,yz,xyz\}$.
This proves the associativity of $E$ since, by hypothesis, $\sigma=\sigma'+\epsilon_I$ for some $\sigma'\in pZ^2(G,A)$.
Finally, denoting by $\psi$ the canonical partial homomorphism $G\to\Exel{G}/I$, sending $g$ to $\Pi(g)$ whenever the latter is not in $I$, and to $\infty$ otherwise, and taking the section $\varphi:G\to E$ which accordingly maps $g$ either to $(-\sigma(1,1),\psi(g))$ or to $\infty$, it is evident that the associated map is $\sigma=\sigma'+\epsilon_I$.
\end{proof}
It is routine to check that the isomorphism class of the extension $E$ only depends on the partial cohomology class of $\sigma$ in $H^2(G,I;A)$, consequently:
\begin{theorem}\label{Thm:partial cohomology and extensions}
 For any ideal $I$ in $\Lambda$, there is a one-to-one correspondence between the equivalence classes of $A$-cancellative central extensions of $\Exel{G}/I$ and the elements of $H^2(G,I;A)$.
\end{theorem}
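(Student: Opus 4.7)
The plan is to establish the bijection by constructing two mutually inverse assignments between isomorphism classes of $A$-cancellative central extensions of $\Exel{G}/I$ and elements of $H^2(G,I;A)$. The backward direction is essentially given by \autoref{Lem:extension of E3 by a precocycle}: it takes $\sigma\in Z^2(G,I;A)$ and produces an extension $E_\sigma$ equipped with a canonical section realizing $\sigma$ through \eqref{Eq:defining precocycles}-\eqref{Eq:defining precocycles right}. For the forward direction, given an $A$-cancellative central extension $E$ of $M=\Exel{G}/I$, composing the quotient $\Exel{G}\to M$ with $\Pi$ yields a partial homomorphism $\psi:G\to M$; I would choose a set-theoretic section $\varphi:G\to E$ that lifts $\psi$ and satisfies $\varphi(g)=\infty$ exactly when $\psi(g)=\infty$, then define $\sigma$ by \eqref{Eq:defining precocycles}-\eqref{Eq:defining precocycles right}, setting $\sigma(g,h)=\infty$ whenever $\Pi(g,h)\in I$. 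The $A$-cancellativity hypothesis ensures that $\sigma(g,h)$ is uniquely determined outside $I$, and the discussion preceding \autoref{Def:pre-cohomology}, applied to the associativity relation on $\varphi(\inv{x})\cdot\varphi(x)\cdot\varphi(y)\cdot\varphi(z)\cdot\varphi(\inv{z})$, places $\sigma$ in $Z^2(G,I;A)$.

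Next I would verify that this forward assignment is well-defined on equivalence classes. Two sections $\varphi,\varphi'$ of the same extension differ, on the complement of $\inv{\psi}(\infty)$, by a function $\zeta:G\to A$ with $\varphi'(g)=\zeta(g)\varphi(g)$; substituting into \eqref{Eq:defining precocycles} and invoking the centrality of $A$ together with $A$-cancellativity produces $\sigma'=\sigma+\delta^1\zeta$ modulo $\epsilon_I$, so the class $[\sigma]\in H^2(G,I;A)$ is intrinsic to $E$. Likewise, an equivalence $\Phi:E\to E'$ of extensions, acting as the identity on $A$ and inducing the identity on $M$, transports any section of $E$ to a section of $E'$ giving the same $\sigma$. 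The symmetric statement on the other side, namely that cohomologous $\sigma$ and $\sigma+\delta^1\zeta+\epsilon_J$ (with $J\subseteq I$) yield equivalent extensions $E_\sigma$, is then handled by the map $(\lambda,R,x)\mapsto(\lambda+\zeta(x),R,x)$, extended by sending $\infty$ to $\infty$ and correcting the unit, whose homomorphism property follows directly from the multiplication rule in \autoref{Lem:extension of E3 by a precocycle}.

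Finally, I would check that the two constructions are inverse to each other. Starting from $\sigma$, the section exhibited in \autoref{Lem:extension of E3 by a precocycle} recovers exactly $\sigma$ via \eqref{Eq:defining precocycles}-\eqref{Eq:defining precocycles right}. Starting from $E$ with section $\varphi$ and extracted cocycle $\sigma$, the assignment
\[
\Psi:E_\sigma\longrightarrow E\ ,\ (\lambda,R,x)\longmapsto\lambda\cdot\varphi(g_1)\cdots\varphi(g_k)\ ,
\]
chosen for any factorization $(R,x)=\Pi(g_1,\ldots,g_k)$ in $\Exel{G}\setminus I$, is shown to be a well-defined isomorphism of extensions; surjectivity onto the nonzero part of $E$ follows from the universal property \eqref{Eq:Exel universal}, while injectivity uses $A$-cancellativity together with the description of the principal ideals \eqref{Eq:principal ideals}.

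The main obstacle will be the well-definedness of $\Psi$, because two factorizations of the same element $(R,x)$ of $\Exel{G}\setminus I$ by strings of generators $\Pi(g_i)$ must be shown to produce equal values in $E$. This reduces to verifying that the defining relations \eqref{Eq:partial homomorphism 1}-\eqref{Eq:partial homomorphism 3} lift to $E$ modulo suitable elements of $A$ prescribed by $\sigma$; the crucial input is that, thanks to $N_3\subseteq I$ and \eqref{Eq:1 notin xyzxyyzxyz}, every associativity check arising within $\Exel{G}\setminus I$ falls into a range where the pre-cocycle identity for $\sigma$ holds, and $A$-cancellativity propagates these equalities to $E$ itself. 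Once well-definedness is secured, the homomorphism property and bijectivity of $\Psi$ are routine from the multiplication formula in the Lemma, completing the correspondence.
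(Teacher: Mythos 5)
Your overall architecture is the one the paper intends (the paper compresses everything beyond \autoref{Lem:extension of E3 by a precocycle} into ``it is routine to check''), and the first two thirds of your plan are sound: extracting $\sigma$ from a section via $A$-cancellativity, showing that a change of section alters $\sigma$ by a coboundary, and showing that cohomologous cocycles give equivalent extensions via $(\lambda,R,x)\mapsto(\lambda+\zeta(x),R,x)$ all work as you describe. The genuine gap is in the map $\Psi$, and it is not a missing verification but a step that fails: $\Psi$ as you define it is not well-defined, and the reason you offer for well-definedness (that every associativity check falls under the pre-cocycle identity) addresses the wrong obstruction. The problem is not associativity of words but the defining relations \eqref{Eq:partial homomorphism 1}--\eqref{Eq:partial homomorphism 3} themselves, which $\varphi$ satisfies only \emph{up to} the factors $\sigma(g,h)$. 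Concretely, $\Pi(g,h,\inv{h})=\Pi(gh,\inv{h})$ in $\Exel{G}$, so $(g,h,\inv{h})$ and $(gh,\inv{h})$ are two factorizations of the same element of $\Exel{G}\setminus I$; but \eqref{Eq:defining precocycles right} gives $\varphi(g)\varphi(h)\varphi(\inv{h})=\varphi(gh)\varphi(\inv{h})\cdot\sigma(g,h)$, so the two candidate values of $\Psi$ differ by $\sigma(g,h)$, which is nonzero for generic $\sigma$ (by \autoref{Prop:precohomology as functions} the values of a pre-cocycle on orbit representatives are arbitrary). Likewise $(g,h)$ and $(g,h,\inv{h},h)$ name the same element of $\Exel{G}$ while their $\varphi$-products differ essentially by $\sigma(h,\inv{h})$. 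No identity from \autoref{Lem:characterization of precocycles} kills these discrepancies.

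The repair is to abandon ``any factorization'' and use the normal form instead: since $N_3\subseteq I$, every element of $\Exel{G}\setminus I$ is an $(R,x)$ with $|R|\leq 3$ and hence admits one canonical word of length at most three (namely $\Pi(1)$, $\Pi(g)$, $\Pi(g,h)$, or $\Pi(r,\inv{r}s,\inv{s})$ for $(\{1,r,s\},1)$). Define $\Psi(\lambda,R,x)$ as $\lambda$ times the $\varphi$-product of that fixed word, and then verify multiplicativity case by case using identities (c)--(g) of \autoref{Lem:characterization of precocycles}; this amounts to checking that the multiplication table of \autoref{Lem:extension of E3 by a precocycle} is reproduced inside $E$, after normalizing the section so that $\varphi(1)$ is the identity. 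With that replacement the remainder of your argument goes through: injectivity follows from $A$-cancellativity together with the fact that the Rees quotient $\Exel{G}\to\Exel{G}/I$ is injective off $I$, and surjectivity from the observation that $E$ is generated by $A$ together with $\varphi(G)$ because $\Exel{G}/I$ is generated by the images of the $\Pi(g)$.
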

In particular, in the case $A=\K^\times$, any partial cocycle in $Z^2(G,\Lambda;\K^\times)$ defines a $\K^\times$-cancellative monoid together with a partial projective representation, and vice-versa. Therefore, the partial multiplier $pM(G)$ introduced in \cite{MR2559695} turns out to be a partial cohomology semilattice of groups:
\begin{proposition}\label{Prop:partial multiplier and precohomology}
 Let $G$ be a group and $\K$ a field. Then the partial multiplier $pM(G)$ over $\K$ coincides with $H^2(G,\Lambda;\K^\times)$.
\end{proposition}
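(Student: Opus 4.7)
The plan is to deduce the statement from \autoref{Thm:partial cohomology and extensions} in the special case $A=\K^\times$, fibre by fibre over $\Lambda$, and then to verify that the two semilattice structures agree. Recall from \cite{MR2559695} that $pM(G)$ is, by construction, the disjoint union over $I\in\Lambda$ of the equivalence classes of factor sets of partial projective representations $\psi\colon G\to \M_n(\K)/\!\sim$ whose vanishing ideal $\inv{\bar\psi}(\infty)$ equals $I$, where equivalence is given by change of section.

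For the fibre over a fixed ideal $I$, I would argue both directions. By the discussion preceding \autoref{Def:pre-cohomology}, every factor set of a partial projective representation satisfies \eqref{Eq:defining precocycles}-\eqref{Eq:defining precocycles right}, and hence lies in $Z^2(G,I;\K^\times)$. Conversely, given $\sigma\in Z^2(G,I;\K^\times)$, \autoref{Lem:extension of E3 by a precocycle} supplies a $\K^\times$-cancellative central extension $E$ of $\Exel{G}/I$ together with a section $\varphi$ producing $\sigma$; passing to a concrete matrix realisation of $E$ (for instance via a regular representation on its contracted semigroup algebra) recovers a partial projective representation in the sense of \cite{MR2559695} with the prescribed factor set. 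The standard change-of-section calculation $\varphi(g)\mapsto \zeta(g)\cdot\varphi(g)$ with $\zeta\colon G\to\K^\times$ then shows that two factor sets represent the same element of $pM(G)$ exactly when they differ by an element of $B^2(G,I;\K^\times)$, yielding the fibre-wise identification with $H^2(G,I;\K^\times)$.

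The remaining and most delicate step is to identify the semilattice structures. The idempotents of $pM(G)$ are naturally indexed by $\Lambda$ and correspond to the classes $[\epsilon_I]$ on the cohomology side; the product in $pM(G)$, induced from the pointwise addition of factor sets together with the join of their vanishing ideals, matches the rule $[\sigma+\epsilon_I]+[\tau+\epsilon_J]=[\sigma+\tau+\epsilon_{I\cup J}]$ fixed in \autoref{Def:partial semilattice}. The main obstacle is to carry this last matching out with care: one must ensure that pairs $(g,h)$ on which one factor set takes a value in $\K^\times$ while the other is $\infty$ are treated consistently on both sides, which amounts to tracking the behaviour of $\epsilon_{I\cup J}$ with respect to the inclusions $I,J\subseteq I\cup J$. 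Once these conventions are reconciled, the fibre-wise bijections assemble into the required isomorphism $pM(G)\cong H^2(G,\Lambda;\K^\times)$.
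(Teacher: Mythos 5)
Your proposal is correct and follows essentially the same route as the paper: the proposition is obtained there as an immediate consequence of \autoref{Thm:partial cohomology and extensions} with $A=\K^\times$ (that is, of \autoref{Lem:extension of E3 by a precocycle} together with the discussion of factor sets at the start of \autoref{Section:analogies}), the point being that elements of $Z^2(G,I;\K^\times)$ and factor sets of partial projective representations with vanishing ideal $I$ determine one another, with change of section accounting exactly for $B^2(G,I;\K^\times)$, and the semilattice structures matching as you describe. The only superfluous step in your write-up is the passage to a concrete matrix realisation: since the paper (following \cite{MR2835207,MR3085031}) treats $pM(G)$ via partial projective representations into arbitrary $\K^\times$-cancellative monoids, \autoref{Lem:extension of E3 by a precocycle} already supplies the required representation directly.
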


\section{Pre-cohomology}\label{Section:pre-cohomology}
The pre-cocycles can be characterized in several different ways:
\begin{lemma}\label{Lem:characterization of precocycles}
 Let $\sigma\in A^{G\times G}$. The following properies are equivalent:
 \begin{enumerate}[i.]
  \item $\sigma$ is a pre-cocycle.
  \item $1\in\{xy,xyz\}\Rightarrow\delta^2\sigma(x,y,z)=0$
  \item For every $g,h\in G$ the following holds:
  \begin{enumerate}[a.]
   \item $\sigma(h,\inv{h}\inv{g})=\sigma(g,h)+\sigma(gh,\inv{h}\inv{g})-\sigma(g,\inv{g})$
   \item $\sigma(\inv{h},\inv{g})=-\sigma(h,\inv{h}\inv{g})+\sigma(h,\inv{h})+\sigma(1,1)$
  \end{enumerate}
 \end{enumerate}
Moreover, if the above properties are satisfied, then:
 \begin{enumerate}[i.]
  \item[]
  \begin{enumerate}[a.]
   \item[c.] $\sigma(g,1)=\sigma(1,1)=\sigma(1,g)$
   \item[d.] $\sigma(g,\inv{g})=\sigma(\inv{g},g)$
   \item[e.] $\sigma{ (\inv{h}\inv{g},g)}=\sigma(g,h)+\sigma{ (gh,\inv{h}\inv{g})}-\sigma{ (h,\inv{h})}$
   \item[f.] $\sigma{ (gh,\inv{h})}=-\sigma(g,h)+\sigma{ (h,\inv{h})}+\sigma{ (1,1)}$
   \item[g.] $\sigma{ (\inv{g},gh)}=-\sigma(g,h)+\sigma{ (g,\inv{g})}+\sigma{ (1,1)}$
  \end{enumerate}
 \end{enumerate}
\end{lemma}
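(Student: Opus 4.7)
The plan is to prove the equivalences by the cycle (i) $\Rightarrow$ (ii) $\Rightarrow$ (iii) $\Rightarrow$ (i), and then to derive the consequences (c)--(g) as by-products. The implication (i) $\Rightarrow$ (ii) is immediate since (ii) restricts (i) to two of the six defining cases. Each of the remaining implications amounts to specializing the cocycle identity $\delta^2\sigma(x,y,z)=0$ to judiciously chosen triples and rearranging.

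For (ii) $\Rightarrow$ (iii), I first apply $\delta^2\sigma=0$ to the triples $(g,\inv{g},1)$ and $(1,g,\inv{g})$, both of which satisfy $xy=1$ or $xyz=1$, to obtain the boundary normalizations $\sigma(h,1)=\sigma(1,h)=\sigma(1,1)$, that is (c). Next, applying the identity to $(g,\inv{g},g)$ and substituting (c) yields the symmetry (d). The triple $(g,h,\inv{h}\inv{g})$, for which $xyz=1$, expands directly into (iii.a). Finally, the triple $(\inv{h},h,\inv{h}\inv{g})$, which satisfies $xy=1$, gives after substitution of (c) and (d) the identity (iii.b). Thus (c) and (d) are established as consequences of condition (ii) in the course of proving (iii).

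For (iii) $\Rightarrow$ (i), I verify $\delta^2\sigma(x,y,z)=0$ case by case according to which element of $\{x,y,z,xy,yz,xyz\}$ equals $1$. The cases $x=1$, $y=1$, $z=1$ collapse to trivialities once (c) is available. The case $xyz=1$ is exactly (iii.a) after parametrizing $z=\inv{h}\inv{g}$. The case $xy=1$, after setting $y=\inv{x}$, rearranges into $\sigma(\inv{x},z)=-\sigma(x,\inv{x}z)+\sigma(x,\inv{x})+\sigma(1,1)$, which is (iii.b) applied with $\inv{h}=\inv{x}$ and $\inv{g}=z$. The remaining case $yz=1$ reduces to the identity (f), which I will derive together with (e) and (g) by manipulating (iii.a) and (iii.b) through substitutions governed by the $S_3$ action of \eqref{Eq:S3 action}: concretely, (e) arises from (iii.a) upon the substitution $g\leftrightarrow\inv{h}\inv{g}$, while (f) and (g) follow by combining (iii.a) with (iii.b) after replacing $(g,h)$ by $(gh,\inv{h})$ and by $(\inv{g},gh)$ respectively. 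The main obstacle is the systematic bookkeeping required to track these substitutions and confirm that all six degenerate cases are covered; the guiding principle, however, is that (iii.a) and (iii.b) correspond to the two $S_3$-orbit representatives among the degenerate triples, so the remaining four cases follow by symmetry, with (c) and (d) playing the role of the needed boundary normalizations.
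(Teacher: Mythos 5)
Your proposal is correct and follows essentially the same route as the paper's proof: the trivial implication $i\Rightarrow ii$, the identification of the cases $xyz=1$ and $xy=1$ with $iii.a$ and with $iii.b$ modulo the normalization $c$, the derivation of $c$ and $d$ from degenerate triples, and the treatment of the remaining case $yz=1$ by combining $iii.a$ and $iii.b$. The only cosmetic difference is that you reach the $yz=1$ case through the derived identities $e$ and $f$, whereas the paper performs the corresponding substitutions directly in its step VII; the underlying computation is the same.
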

\begin{proof}
 Denote by $i.x$ the property $[x=1\Rightarrow\delta^2\sigma(x,y,z)=0,\ \forall y,z]$, and so on, so that the property $i$ is the conjunction of the properties:
 $i=i.x\wedge i.y\wedge i.z\wedge i.xy\wedge i.yz\wedge i.xyz$.
 The proof is divided in eight steps.
 
 I) $i\Rightarrow ii$. This is trivial, since $ii=i.xy\wedge i.xyz$.
 
 II) $i.xyz=iii.a$. Indeed, $iii.a=[\delta^2\sigma(g,h,\inv{h}\inv{g})=0,\ \forall g,h]=i.xyz$.
 
 III) $i.xyz\Rightarrow c$.
 Observe that $\inv{g}\cdot g\cdot 1=1\cdot g\cdot\inv{g}=1,\ \forall g$. Thus, $i.xyz\Rightarrow[\sigma(g,1)-\sigma(1,1)=\delta^2\sigma(\inv{g},g,1)=0]\wedge[\sigma(1,1)-\sigma(1,g)=\delta^2\sigma(1,g,\inv{g})=0]$.
 
 IV) $ii\iff iii$. Write down
 \[\delta^2\sigma(h,\inv{h},\inv{g})=\sigma(\inv{h},\inv{g})-\sigma(1,\inv{g})+\sigma(h,\inv{h}\inv{g})-\sigma(h,\inv{h})\ .\]
 Then, by $c$ one has $\sigma(1,\inv{g})=\sigma(1,1)$, so that $i.xy\wedge c\Rightarrow iii.b$, as well as $iii.b\wedge c\Rightarrow i.xy$.
 Therefore, the result follows by II and III.
 
 V) $i.xyz\Rightarrow i.x\wedge i.y\wedge i.z$. It is easily seen that $c\Rightarrow i.x\wedge i.y\wedge i.z$, simply by consideration of $\delta^2\sigma(1,y,z)$, $\delta^2\sigma(x,1,z)$, and $\delta^2\sigma(x,y,1)$. Therefore, this statement follows by III.

 VI) $ii\Rightarrow d$. Firstly, $i.xy\Rightarrow[\delta^2(g,\inv{g},g)=0,\ \forall g]$.
 Thus, writing down explicitly $0=\delta^2\sigma(g,\inv{g},g)=\sigma(\inv{g},g)-\sigma(1,g)+\sigma(g,1)-\sigma(g,\inv{g})$,
 the claim follows by III.
  
 VII) $iii\Rightarrow i$. In view of IV and V, it is left to prove that $iii\Rightarrow i.yz$.
 Observe that $i.yz=[\delta^2\sigma(x,y,\inv{y})=0,\ \forall x,y]$, and explicitly write down
\begin{equation}\label{eq:pf characterization precocycles 1}
 \delta^2\sigma(x,y,\inv{y})=\sigma(y,\inv{y})-\sigma(xy,\inv{y})+\sigma(x,1)-\sigma(x,y)\ .
\end{equation}
 Setting $h=xy$ and $g=\inv{x}$ in $iii.b$, yields
 \[\sigma(\inv{y}\inv{x},x)=-\sigma(xy,\inv{y})+\sigma(xy,\inv{y}\inv{x})+\sigma(1,1)\ .\]
 By the use of $c$, which is allowed by III, $\sigma(1,1)=\sigma(x,1)$, so that
\begin{equation}\label{eq:pf characterization precocycles 2}
-\sigma(xy,\inv{y})+\sigma(x,1)=\sigma(\inv{y}\inv{x},x)-\sigma(xy,\inv{y}\inv{x})\ .\end{equation}
 Therefore, substituting \eqref{eq:pf characterization precocycles 2} in \eqref{eq:pf characterization precocycles 1}, one obtains
\begin{equation}\label{eq:pf characterization precocycles 3}
 \delta^2\sigma(x,y,\inv{y})=\sigma(y,\inv{y})+\sigma(\inv{y}\inv{x},x)-\sigma(xy,\inv{y}\inv{x})-\sigma(x,y)\ .
\end{equation}
 Now, setting $g=\inv{y}\inv{x}$ and $h=x$ in $iii.a$,
 yields
\[\sigma(x,y)=\sigma(\inv{y}\inv{x},x)+\sigma(\inv{y},y)-\sigma(\inv{y}\inv{x},xy)\ ,\]
that is
\begin{equation}\label{eq:pf characterization precocycles 4}
\sigma(\inv{y},y)+\sigma(\inv{y}\inv{x},x)-\sigma(\inv{y}\inv{x},xy)-\sigma(x,y)=0\ .
\end{equation}
The use of $d$, which is allowed by VI, yields $\sigma(\inv{y},y)=\sigma(y,\inv{y})$, and $\sigma(xy,\inv{y}\inv{x})=\sigma(\inv{y}\inv{x},xy)$. Thus, \eqref{eq:pf characterization precocycles 4} implies that \eqref{eq:pf characterization precocycles 2} vanishes, that is $\delta^2\sigma(x,y,\inv{y})=0$.
 
 VIII) $i\Rightarrow e\wedge f\wedge g$.
 Writing down the equalities $\delta^2\sigma(\inv{h}\inv{g},g,h)=0$, $\delta^2\sigma(g,h,\inv{h})=0$, and $\delta^2\sigma(\inv{g},g,h)=0$ explicitly, which are guaranteed by $i.xyz$, $i.yz$, and $i.xy$ respectively, it can be easily seen that the result just requires the use of $c$ and $d$, which is allowed by III and VI.
\end{proof}
The above description of the pre-cocycles allows one to relate these functions with the action of $S_3$ defined in \eqref{Eq:S3 action}.
\begin{proposition}\label{Prop:precohomology as functions}
 Let $G$ be any group. Introduce on $G\times G$ the following equivalence relation:
 for any element $g$ of $G$ set 
 \[(g,1)\sim (1,1)\sim (1,g)\ \ ,\ \ (g,\inv{g})\sim (\inv{g},g)\]
 and, for any pair of elements $g$ and $h$ of $G$ satisfying $g\neq 1\neq h\neq\inv{g}$ set
 \[(g,h)\sim(h,\inv{h}\inv{g})\sim(\inv{h},\inv{g})\ .\]
 Then the group of pre-cocycles $pZ^2(G,A)$ is isomorphic with the group of functions $A^\Omega$ over the set of equivalence classes $\Omega=G\times G/\sim$.
\end{proposition}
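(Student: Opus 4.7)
The plan is to realise the isomorphism explicitly as restriction to a chosen transversal of $\sim$. Fix $T \subset G \times G$ containing exactly one representative from each equivalence class, with $(1,1) \in T$ and, for each $g \neq 1$, exactly one pair from $\{(g, \inv{g}), (\inv{g}, g)\}$ in $T$. Define $\Phi \colon pZ^2(G, A) \to A^T$ by $\sigma \mapsto \sigma|_T$; this is patently a homomorphism of abelian groups, and the problem reduces to proving it bijective.

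For injectivity I would suppose $\sigma|_T = 0$ and deduce $\sigma \equiv 0$ by exploiting \autoref{Lem:characterization of precocycles}. Property (c) propagates $\sigma(1,1) = 0$ to every pair of the form $(g,1)$ or $(1,g)$, while property (d) transfers the vanishing across an inverse pair $\{(g, \inv{g}), (\inv{g}, g)\}$. Any type-$3$ pair $(g,h)$, meaning $g \neq 1 \neq h \neq \inv{g}$, lies in an $S_3$-orbit under \eqref{Eq:S3 action} meeting $T$ in a single point $(g_0, h_0)$, and identities (a), (b), (e), (f), (g) of the lemma express $\sigma(g,h)$ as $\pm \sigma(g_0, h_0)$ plus a combination of $\sigma(1,1)$ and type-$2$ values, each of which already vanishes.

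For surjectivity, given $f \in A^T$, I would reverse this prescription: extend $f$ to $\sigma \colon G \times G \to A$ by declaring $\sigma(g, 1) = \sigma(1, g) = f([1,1])$ for every $g$, and $\sigma(g, \inv{g}) = \sigma(\inv{g}, g) = f([g, \inv{g}])$ for every $g \neq 1$; and for each type-$3$ orbit, setting $\sigma(g_0, h_0) = f([g_0, h_0])$ on the $T$-representative and filling in the remaining five orbit members by the explicit formulas (a), (b), (e), (f), (g). By \autoref{Lem:characterization of precocycles}, it then suffices to verify that the resulting $\sigma$ satisfies conditions (iii.a) and (iii.b) at every pair $(g,h) \in G \times G$.

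The hard part will be this last verification when $(g,h)$ is of type~$3$ but not the chosen representative of its orbit. One must show that (iii.a) and (iii.b), rewritten at any of the six members of a generic $S_3$-orbit, are equivalent consequences of the two identities imposed at the representative. This amounts to a finite algebraic check, supported by the observation that the $S_3$-transformations \eqref{Eq:S3 action} permute the triple of type-$2$ companions $\sigma(g,\inv{g})$, $\sigma(h,\inv{h})$, $\sigma(gh,\inv{h}\inv{g})$ among themselves (using property (d)), so that all six orbit members share the same pool of auxiliary values and the five derived formulas close up into a single coherent system.
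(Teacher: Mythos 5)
Your proposal is correct and follows essentially the same route as the paper: both fix a transversal of $\sim$, take restriction to the representatives as the forward map, and construct the inverse by propagating the value at each representative through its $S_3$-orbit via the identities of \autoref{Lem:characterization of precocycles}. The paper likewise leaves the final orbit-by-orbit consistency check as a routine verification, so there is no substantive difference in approach.
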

\begin{proof}
 Fix a choice of representatives $\Delta:\Omega\to G\times G$ satisfying, for convenience, $\Delta(1,1)_\sim=(1,1)$.
 As for the homomorphism $pZ^2(G,A)\to A^\Omega$ simply take the composition $\sigma\mapsto\sigma\circ\Delta$.
 Whereas, the homomorphism $A^\Omega\to pZ^2(G,A)$, such that $f\mapsto\sigma$, is defined as follows.
First, set
\[\sigma(g,1)=\sigma(1,g)=f(1,1)_\sim\ \ \ ,\ \ \sigma(g,\inv{g})=\sigma(\inv{g},g)=f(g,\inv{g})_\sim\]
for any $g\in G$.
Then, for any other class $\omega$ of $\Omega$, that is $\Delta(\omega)=(g,h)$ where $g\neq 1\neq h\neq\inv{g}$, set
\[\sigma(g,h)=f(\omega)\ ,\]
and, in accordance with the relations $iii.a$, $iii.b$ $iii.e$, $iii.f$ $iii.g$ of \autoref{Lem:characterization of precocycles}, define the values of $\sigma$ over the remaining elements of the orbit as follows:
\[\def\arraystretch{1.5}\begin{array}{l}
 \sigma{ (h,\inv{h}\inv{g})}=f(\omega)+\sigma{ (gh,\inv{h}\inv{g})}-\sigma{ (g,\inv{g})}\\
 \sigma{ (\inv{h},\inv{g})}=-f(\omega)+\sigma{ (g,\inv{g})}+\sigma{ (h,\inv{h})}-\sigma{ (gh,\inv{h}\inv{g})}+\sigma{ (1,1)}\\
 \sigma{ (\inv{h}\inv{g},g)}=f(\omega)+\sigma{ (gh,\inv{h}\inv{g})}-\sigma{ (h,\inv{h})}\\
 \sigma{ (gh,\inv{h})}=-f(\omega)+\sigma{ (h,\inv{h})}+\sigma{ (1,1)}\\
 \sigma{ (\inv{g},gh)}=-f(\omega)+\sigma{ (g,\inv{g})}+\sigma{ (1,1)}\ .
\end{array}\]
It is easy to check that these homomorphisms are inverse of each other.
\end{proof}
In the case of a finite group $G$, the previous result proves that $pZ^2(G,\Z)$ is a finitely generated abelian group of functions, so that standard techniques can be used (see \cite[\S 97]{MR0349869}).
\begin{theorem}\label{Thm:precohomology group}
 Let $G$ be a group of order $n<\infty$, and denote by $G_{(k)}$ the set of elements of order $k$ in $G$.
 Then, for any abelian group $A$, \[pH^2(G,A)\simeq A^{m-n}\oplus A\otimes G/[G,G]\ ,\]
 where $m={(n^2+2|G_{(3)}|+3|G_{(2)}|+5)}/{6}$.
\end{theorem}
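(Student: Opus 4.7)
The plan is to reduce the statement to the case $A=\Z$ via right-exactness of the tensor product, and there to compute the cokernel of $\delta^1$ by identifying its free rank and its torsion subgroup separately.

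First I would verify that $|\Omega|=m$. By \autoref{Prop:precohomology as functions} we know $pZ^2(G,A)\simeq A^\Omega$, so it remains to enumerate equivalence classes. I partition $\Omega$ into: (i) the single class $[(1,1)]$ absorbing the $2n-1$ pairs with a trivial entry; (ii) the classes $[(g,g^{-1})]$ for $g\neq 1$, of which there are $|G_{(2)}|$ singletons and $(n-1-|G_{(2)}|)/2$ of size two; and (iii) the $(n-1)(n-2)$ non-degenerate pairs, partitioned into $S_3$-orbits under \eqref{Eq:S3 action}. A short stabilizer computation shows that the only orbits of size less than six occur for $(g,g)$ with $g$ of order three, each such orbit being $\{(g,g),(g^{-1},g^{-1})\}$; this yields $|G_{(3)}|/2$ orbits of size two and the remaining orbits all of size six. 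Summing yields the stated $m$.

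Second I would observe that $\delta^1\colon A^G\to A^\Omega$ is represented by an integer matrix with entries $M_{\omega,g}=[y_\omega=g]-[x_\omega y_\omega=g]+[x_\omega=g]$, for any chosen representative $(x_\omega,y_\omega)$ of $\omega$. Thus $\delta^1_A=\mathrm{id}_A\otimes_\Z\delta^1_\Z$ where $\delta^1_\Z\colon\Z^n\to\Z^m$ is the $\Z$-valued coboundary, and right-exactness of $A\otimes_\Z-$ gives $pH^2(G,A)\simeq A\otimes_\Z pH^2(G,\Z)$. This reduces the task to computing $pH^2(G,\Z)$. Now $\delta^1_\Z$ is injective because $\ker\delta^1=\Hom(G,\Z)=0$ for $G$ finite, so $pH^2(G,\Z)$ is the cokernel of an injection $\Z^n\hookrightarrow\Z^m$ and therefore has free rank exactly $m-n$. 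For the torsion part I would use that $Z^2(G,\Z)\subseteq pZ^2(G,\Z)$ with the same coboundaries, whence $H^2(G,\Z)\hookrightarrow pH^2(G,\Z)$, while the universal coefficient theorem combined with $H_1(G,\Z)=G/[G,G]$ and the finiteness of $H_2(G,\Z)$ gives $H^2(G,\Z)\simeq G/[G,G]$. The further quotient $pH^2(G,\Z)/H^2(G,\Z)=pZ^2(G,\Z)/Z^2(G,\Z)$ embeds in $\Z^{G\times G\times G}$ via $\sigma\mapsto\delta^2\sigma$ and is therefore torsion-free, so the torsion subgroup of $pH^2(G,\Z)$ coincides with $H^2(G,\Z)\simeq G/[G,G]$. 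Hence $pH^2(G,\Z)\simeq\Z^{m-n}\oplus G/[G,G]$, and tensoring with $A$ concludes the proof.

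The principal delicate point is the orbit count of step one: one must correctly identify all coincidences among the six generic $S_3$-translates of $(g,h)$ and rule out stabilizers of order two occurring outside the degenerate locus. Once this combinatorics is settled, the rest is a clean application of universal-coefficient-type reasoning, with the torsion-free quotient argument playing the role of a pre-cohomology analogue of the usual UCT splitting.
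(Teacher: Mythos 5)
Your proof is correct, and while it follows the same overall strategy as the paper's (identify $pZ^2(G,A)$ with $A^\Omega$ via \autoref{Prop:precohomology as functions}, count $|\Omega|=m$, reduce to integral coefficients, compute the free rank as $m-n$ from the injectivity of $\delta^1$, and identify the torsion with $H^2(G,\Z)\simeq G/[G,G]$), it executes two of the key steps by genuinely different means. First, where the paper reduces to $A=\Z$ at the level of cocycles and then controls $B^2(G,A)$ inside $pZ^2(G,A)$ by running Smith's normalization on the pair $B^2(G,\Z)\subseteq Z^2(G,\Z)$ and extending to a simultaneous basis of $pZ^2(G,\Z)$, you observe that $\delta^1$ is given by an integer matrix and pass the tensor functor through the cokernel, so that $pH^2(G,A)\simeq A\otimes pH^2(G,\Z)$ follows from right-exactness alone; this bypasses exactly the ``delicate analysis'' the paper flags (the generators $1_A\otimes\delta^1 h_g$ need not be free), since right-exactness controls the cokernel even when images are not preserved. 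Second, where the paper splits $Z^2(G,\Z)$ off as a pure (hence complemented) subgroup of $pZ^2(G,\Z)$, you instead use the exact sequence $0\to H^2(G,\Z)\to pH^2(G,\Z)\to pZ^2(G,\Z)/Z^2(G,\Z)\to 0$ and note that the quotient embeds via $\delta^2$ into a free group, so the torsion subgroup is exactly $H^2(G,\Z)$; both devices are standard and equally valid. Finally, you actually carry out the orbit count for $m$ (which the paper defers to the literature), and your stabilizer analysis is right: in the nondegenerate locus the only nontrivial stabilizers under the action \eqref{Eq:S3 action} are of order three, occurring precisely at the pairs $(g,g)$ with $g^3=1$, $g\neq 1$, whose orbits $\{(g,g),(\inv{g},\inv{g})\}$ have size two, and the resulting tally reproduces $m=(n^2+2|G_{(3)}|+3|G_{(2)}|+5)/6$. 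Net effect: your route buys a shorter and more conceptual reduction to $\Z$, at the cost of being less explicit --- the paper's Smith-normal-form bases are what make the algorithmic computations of the subsequent example (and of \autoref{Thm:precohomology group and ideals}) immediately available.
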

\begin{proof}
Since $G$ is finite then $\Omega$ is also finite and, consequently, the group $A^{\Omega}$ is a finitely generated $A$-module.
Denoting $m=|\Omega|$, the above formula for $m$ in terms of $n$, $|G_{(2)}|$, and $|G_{(3)}|$ can be easily achieved (a similar computation is given in \cite{MR3492987}).
Furthermore, if $\Omega=\{\omega_1,\ldots,\omega_m\}$ then
\[A^\Omega=Ah_{\omega_1}\oplus\cdots\oplus Ah_{\omega_m}\ ,\]
where $h_{\omega_i}$ denotes the characteristic function $h_{\omega_{i}}:\omega_j\mapsto\delta_{ij}$ (the Kronecker symbol).
Hence,
\[A^\Omega\simeq A\otimes\Z^\Omega\ \ ,\ \ \Z^\Omega=\Z{h_{\omega_1}}\oplus\cdots\oplus\Z{h_{\omega_m}}\ .\]
By \autoref{Prop:precohomology as functions}, unlike the analogue case in the classical theory, it follows that
\[pZ^2(G,A)\simeq A\otimes pZ^2(G,\Z)\ .\]

In view of this fact, one first considers the case $A=\Z$.
Hence, in $pZ^2(G,\Z)$ the subgroup $Z^2(G,\Z)$ admits a complement:
indeed, given any $\sigma\in\Z^{G\times G}$, and a positive integer $q$, one has $\delta^2(q\sigma)=0$ if and only if $\delta^2\sigma=0$, so that $Z^2(G,\Z)$ is a pure subgroup and thus, since $pZ^2(G,\Z)$ is finitely generated, it is complemented \cite[Theorem 28.2]{MR0255673}.
Hence, denoting by $r$ the free-abelian rank of $Z^2(G,\Z)$, one has $pZ^2(G,\Z)=\Z^{m-r}\oplus Z^2(G,\Z),$
and so \[pH^2(G,\Z)=\Z^{m-r}\oplus H^2(G,\Z)\ .\]
It is well known that, since $G$ is finite, $H^2(G,\Z)\simeq H^1(G,\Q/\Z)\simeq G/[G,G]$ (see \cite[Proposition 6.1, Corollary 10.2]{MR672956}). Therefore, in order to prove the theorem's claim for $A=\Z$ it is left to show that $r=n$.
To see this, it is sufficient to find a finite index subgroup of $Z^2(G,\Z)$ isomorphic with $\Z^n$, and this goal is achieved by taking $B^2(G,\Z)$. Indeed, since $|H^2(G,\Z)|<\infty$, it has finite index and, since $n<\infty$ implies that $H^1(G,\Z)=0$, the desired isomorphism is given by the differential $\delta^1:C_1(G,\Z)=\Z^G\simeq \Z^n\to B^2(G,\Z)$.

Returning to the case of generic coefficients one has to consider the quotient $pZ^2(G,A)/B^2(G,A)$ and, more precisely, to describe $B^2(G,A)$ as a subgroup of $pZ^2(G,A)$.
The only difficulty is that, denoting by $h_g$ the characteristic function relative to $g$ in $C^1(G,\Z)=\Z^G$,
the canonical (finite) set $\{\delta^1h_g\ |\ g\in G\}$ freely generates $B^2(G,\Z)$ as a $\Z$-module,
but in $B^2(G,A)$ the corresponding generating set $\{1_A\otimes\delta^1h_g\ |\ g\in G\}$ is not necessarily free.
Consequently, a more delicate analysis of $pZ^2(G,\Z)$ is needed in order to find a basis presenting a good behavior.
To this aim, one considers the inclusion $B^2(G,\Z)\subseteq Z^2(G,\Z)$ of finitely generated free $\Z$-modules, and uses Smith's normalization theorem to obtain simultaneous basis (see \cite[Theorem 8.61]{MR3443588}); therefore, there exists elements $f_1,\ldots,f_n$ of $Z^2(G,\Z)$ together with non-negative integers $\lambda_1,\ldots,\lambda_n$, which are uniquely determined by imposing the condition that $\lambda_i$ divides $\lambda_{i+1}$ for every $i<n$, satisfying
\[Z^2(G,\Z)=\Z f_1\oplus\cdots\oplus\Z f_n\ \ ,\ \ B^2(G,\Z)=\lambda_1\Z f_1\oplus\cdots\oplus\lambda_n\Z f_n\ .\]
As already remarked, there is an isomorphism in cohomology which allows to compute the integral coefficients as follows:
\[\Z_{\lambda_{1}}\oplus\cdots\oplus\Z_{\lambda_{n}}=H^2(G,\Z)\simeq G/[G,G]\ .\]
At this moment one chooses a basis $\{g_{m+1},\ldots,g_n\}$ of the complement $\Z^{m-n}$ to obtain a decomposition
$pZ^2(G,\Z)=\Z f_1\oplus\cdots\oplus\Z f_n\oplus\Z g_{n+1}\oplus\cdots\oplus\Z g_{m}$.
It is readily seen that $1_A\otimes f_1,\ldots,1_A\otimes g_m$ freely generates $pZ^2(G,A)$ as an $A$-module.
Therefore, denoting by $T=\lambda_1 Af_1\oplus\cdots\oplus \lambda_n A f_n$, it is left to prove that $B^2(G,A)=T$.
Since each $\delta^1h_g\in B^2(G,\Z)$ can be expressed as an integral combination of the elements $\lambda_1f_1,\ldots,\lambda_nf_n$, it follows that $1_A\otimes\delta^1h_g\in T$ for every $g\in G$, so that $B^2(G,A)\leq T$. On the other hand, since the elements $\{\delta^1h_g\ |\ g\in G\}$ constitute a basis of $B^2(G,\Z)$, then every $\lambda_if_i$ is an integral combination of these elements, proving that $\lambda_i1_A\otimes f_i\in B^2(G,A)$, so that $T\leq B^2(G,A)$.
\end{proof}

\section{Partial cohomology relative to an ideal}\label{Section:partial cohomology}
In this section the results about the pre-cohomology groups are generalized to the partial cohomology groups relative to ideals $I\in\Lambda$.

In the above notation, let $\Delta:\Omega\to G\times G$ denote a choice of representatives satisfying $\Delta(1,1)_\sim=(1,1)$,
and let $\Omega^\ast=\Omega\setminus\{(1,1)_\sim\}$.
Then the map $\Pi:G\times G\to\Exel{G}$ defined in \eqref{Eq:product of two generators} induces a map
\[\Pi_\sim:\Omega^\ast\to\Lambda\ ,\ \omega\mapsto\gen{\Pi\circ\Delta(\omega)}_{\mathcal{S}}\cup N_3\ ,\]
which, in view of the equalities \eqref{Eq:generation of ideals inverses} and \eqref{Eq:generation of ideals}, does not depend on the choice of $\Delta$.
In this regard the relative pre-cocycles are characterized as follows:
\begin{lemma}\label{Lem:characterization of precocycles and ideals}
A function $\sigma$ in $A_\infty^{G\times G}$ lies in $Z^2(G,I;A)$ if and only if
 it satisfies
 \begin{enumerate}[i.]
  \item $\Pi(g,h)\in I\iff\sigma(g,h)=\infty$,
  \item $\Pi(x,y,z)\notin I\Rightarrow\delta^2\sigma(x,y,z)=0$.
 \end{enumerate}
Moreover, the group $Z^2(G,I;A)$ is isomorphic with the group of function $A^{\Omega_I}$
where $\Omega_I=\{(1,1)_\sim\}\cup\{\omega\in\Omega^\ast\ |\ \Pi_\sim(\omega)\not\subseteq I\}$.
\end{lemma}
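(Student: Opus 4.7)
The plan is to separate the statement into two halves: first, show that conditions (i) and (ii) characterize membership in $Z^2(G, I; A) = pZ^2(G, A) + \epsilon_I$; second, deduce the isomorphism with $A^{\Omega_I}$ by restricting the identification of \autoref{Prop:precohomology as functions}.

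For necessity of (i)-(ii), suppose $\sigma = \sigma' + \epsilon_I$ with $\sigma' \in pZ^2(G, A)$. Condition (i) is immediate from the definition of $\epsilon_I$ and the absorbing behaviour of $\infty$. For (ii), the key observation is that whenever $\Pi(x, y, z) \notin I$, each of $\Pi(x, y)$, $\Pi(y, z)$, $\Pi(xy, z)$, $\Pi(x, yz)$ also lies outside $I$: the first two follow because $I$ is an ideal and $\Pi(x, y, z)$ factors as $\Pi(x, y) \cdot \Pi(z) = \Pi(x) \cdot \Pi(y, z)$; the other two follow from the identities $\Pi(x, y, z) = (\{1, x\}, 1) \cdot \Pi(xy, z)$ and $\Pi(x, y, z) = \Pi(x, yz) \cdot (\{1, \inv{z}\}, 1)$, each verified by direct computation in $\Exel{G}$. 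Hence $\epsilon_I$ vanishes at all four arguments of $\delta^2\sigma(x, y, z)$, giving $\delta^2\sigma(x, y, z) = \delta^2\sigma'(x, y, z)$. Since $N_3 \subseteq I$, the hypothesis $\Pi(x, y, z) \notin I$ also forces $1 \in \{x, y, z, xy, yz, xyz\}$ by \eqref{Eq:1 notin xyzxyyzxyz}, triggering the pre-cocycle property of $\sigma'$.

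For sufficiency, fix a choice of representatives $\Delta: \Omega \to G \times G$ with $\Delta((1, 1)_\sim) = (1, 1)$, and define $f \in A^\Omega$ by $f(\omega) = \sigma(\Delta(\omega))$ for $\omega \in \Omega_I$ and $f(\omega) = 0$ otherwise; by (i) each such value lies in $A$, since $\Pi(1) \notin I$ and $\Pi(\Delta(\omega)) \notin I$ for every other $\omega \in \Omega_I$. Feeding $f$ into \autoref{Prop:precohomology as functions} produces a pre-cocycle $\sigma' \in pZ^2(G, A)$, and the equality $\sigma = \sigma' + \epsilon_I$ is checked pointwise: on pairs $(g, h)$ with $\Pi(g, h) \in I$ both sides are $\infty$, while on pairs outside $I$ the explicit formulas defining $\sigma'$ from $f$ express $\sigma'(g, h)$ in terms of $f$ at $(g, h)_\sim$ and values at canonical pairs such as $(g, \inv{g})$, $(h, \inv{h})$, $(gh, \inv{h} \inv{g})$, $(1, 1)$. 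The required agreement reduces to verifying that $\sigma$ itself satisfies the relations \emph{c}--\emph{g}, \emph{iii.a}--\emph{b} of \autoref{Lem:characterization of precocycles} on arguments outside $I$; each such identity follows by applying (ii) to a specific triple (for example $(\inv{g}, g, 1)$ for the constancy relation \emph{c}, $(g, h, \inv{h}\inv{g})$ for \emph{iii.a}, and so on), using \eqref{Eq:generation of ideals inverses}-\eqref{Eq:ideals and associativity} to ensure that each chosen triple has $\Pi$-image outside $I$ whenever the pair under inspection does.

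Once the characterization is settled, the isomorphism $Z^2(G, I; A) \cong A^{\Omega_I}$ arises by restricting the identification of \autoref{Prop:precohomology as functions}: the map $\sigma \mapsto (\sigma \circ \Delta)|_{\Omega_I}$ is well-defined by (i), surjective by sufficiency, injective by the reconstruction above, and a group homomorphism since $\epsilon_I$ is additive in $A_\infty^{G \times G}$. The main obstacle is the sufficiency step, namely bridging the single hypothesis (ii) to the complete list of auxiliary identities needed to pin down $\sigma$ on every $\sim$-orbit; this is overcome by the case-by-case analysis sketched above, which leans on the principal-ideal relations in $\Exel{G}$ to guarantee that the triples invoked remain outside $I$.
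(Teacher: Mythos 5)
Your proof is correct and takes essentially the same route as the paper: necessity by checking that none of the four summands of $\delta^2\sigma(x,y,z)$ equals $\infty$ and then invoking $N_3\subseteq I$ via \eqref{Eq:1 notin xyzxyyzxyz}, sufficiency by reading off a function $f\in A^\Omega$ (zero on classes inside $I$) and reconstructing a pre-cocycle $\sigma'$ with $\sigma=\sigma'+\epsilon_I$ through \autoref{Prop:precohomology as functions}, and the isomorphism with $A^{\Omega_I}$ by restriction and extension by zero. The only differences are cosmetic: you justify $\Pi(xy,z),\Pi(x,yz)\notin I$ by direct factorizations of $\Pi(x,y,z)$ in $\Exel{G}$ where the paper multiplies by $\Pi(\inv{x})$ and uses $\Pi(x,\inv{x},x,y,z)=\Pi(x,y,z)$, and you spell out the pointwise verification of $\sigma=\sigma'+\epsilon_I$ (via relations \emph{c}--\emph{g} of \autoref{Lem:characterization of precocycles} applied to triples outside $I$) that the paper leaves implicit.
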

\begin{proof}
 Clearly, any function of the form $\sigma=\sigma'+\epsilon_I$ where $\sigma'\in pZ^2(G,A)$ satisfies $i$.
To see that $\sigma$ also satisfies $ii$, firstly one has to show that no summand of $\delta^2\sigma$ is equal to $\infty$.
Equivalently, that no-one among $\Pi(x,y)$, $\Pi(xy,z)$, $\Pi(x,yz)$, and $\Pi(y,z)$ is contained in $I$.
To this aim, the cases of $\Pi(x,y)$ and $\Pi(y,z)$ are trivial; on the other hand, assuming that $\Pi(xy,z)\in I$ yields $\Pi(\inv{x},x,y,z)=\Pi(\inv{x},xy,z)\in I$, but then also $\Pi(x,y,z)=\Pi(x,\inv{x},x,y,z)\in I$ contradicting the hypothesis, and the case of $\Pi(x,yz)$ is similar.
Therefore, when $\Pi(x,y,z)\notin I$ then $\delta\sigma(x,y,z)=\delta\sigma'(x,y,z)$. However, since $I$ contains $N_3$, $\Pi(x,y,z)\notin I$ implies that $1\in\{x,xy,xyz,yz,z\}$, so that $\delta\sigma'(x,y,z)=0$.

 Vice versa, given $\sigma$ satisfying the properties $i$ and $ii$, one has to produce a pre-cocycle $\sigma'$ for which $\sigma=\sigma'+\epsilon_I$.
 Accordingly to \autoref{Prop:precohomology as functions}, one defines $f$ in $A^{\Omega}$ by setting $f(\omega)=0$ if $\sigma\circ\Delta(\omega)=\infty$, and $f(\omega)=\sigma\circ\Delta(\omega)$ otherwise. Then, the desired $\sigma'$ is obtained by means of the isomorphism $A^\Omega\to pZ^2(G,A)$ as $f\mapsto\sigma'$. 

Finally, the above defined function $f\in A^\Omega$ can be regarded as an element of $A^{\Omega_I}$ simply by restriction. Conversely, any function in $A^{\Omega_I}$ can be extended to an element $f$ of $A^{\Omega}$ by imposing the value $0$ over the complementary set $\Omega\setminus\Omega_I$, resulting as above in an element $\sigma\in Z^2(G,I;A)$.
\end{proof}
For the general case of partial cohomology groups relative to ideals, one is able to come at a weaker form of \autoref{Thm:precohomology group}: 
\begin{theorem}\label{Thm:precohomology group and ideals}
 Let $G$ be a finite group. Then
\[H^2(G,I;\Z)\simeq \Z^{m_I-n_I}\oplus \Z/\mu_1 \Z\oplus\cdots\oplus \Z/\mu_{n_I} \Z\]
 where $m_I=|\Omega_I|\geq n_I=\rk_\Z B^2(G,I;\Z)$, and $\mu_1,\ldots,\mu_{n_I}$ are positive integers ordered by recursive division.
Moreover, \[H^2(G,I;A)\simeq A\otimes H^2(G,I;\Z)\simeq A^{m_I-n_I}\oplus A/\mu_1 A\oplus\cdots\oplus A/\mu_{n_I} A\] for any abelian group $A$.
\end{theorem}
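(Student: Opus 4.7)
The plan is to follow essentially the same strategy used in \autoref{Thm:precohomology group}, now armed with the relative analogue \autoref{Lem:characterization of precocycles and ideals} in place of \autoref{Prop:precohomology as functions}. By that lemma, $Z^2(G,I;\Z)$ is canonically isomorphic to $\Z^{\Omega_I}$, hence a finitely generated free abelian group of rank $m_I=|\Omega_I|$, with the isomorphism given by restriction to a fixed choice of representatives in $\Omega_I$.

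Next I would locate $B^2(G,I;\Z)$ inside $Z^2(G,I;\Z)$. Writing $h_g$ for the characteristic function of $g$ in $\Z^G$, the cochains $\delta^1 h_g$ are classical (hence partial) cocycles, and their images after addition of $\epsilon_I$, followed by restriction to $\Omega_I$, give a finite generating family for $B^2(G,I;\Z)$ viewed as a subgroup of $\Z^{\Omega_I}$. In particular, $B^2(G,I;\Z)$ is free abelian of some rank $n_I\leq m_I$.

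At this point Smith's normalization theorem applied to the inclusion $B^2(G,I;\Z)\subseteq Z^2(G,I;\Z)$ produces a free basis $f_1,\ldots,f_{m_I}$ of $Z^2(G,I;\Z)$ together with positive integers $\mu_1\mid\mu_2\mid\cdots\mid\mu_{n_I}$ such that $\mu_1 f_1,\ldots,\mu_{n_I}f_{n_I}$ is a basis of $B^2(G,I;\Z)$; passing to the quotient yields the first decomposition asserted in the statement. For generic coefficients, \autoref{Lem:characterization of precocycles and ideals} provides the natural identification $Z^2(G,I;A)\simeq A^{\Omega_I}\simeq A\otimes Z^2(G,I;\Z)$, and, exactly as in the proof of \autoref{Thm:precohomology group}, the subgroup $B^2(G,I;A)$ coincides with $\mu_1 Af_1\oplus\cdots\oplus\mu_{n_I}Af_{n_I}$: each generator $1_A\otimes\delta^1 h_g$ is an integral combination of the $\mu_i f_i$ and, conversely, each $\mu_i f_i$ is an integral combination of the $\delta^1 h_g$. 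The further isomorphism $H^2(G,I;A)\simeq A\otimes H^2(G,I;\Z)$ then follows from right-exactness of the tensor product applied to the short exact sequence $0\to B^2(G,I;\Z)\to Z^2(G,I;\Z)\to H^2(G,I;\Z)\to 0$.

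The main subtlety I anticipate is the careful bookkeeping around $\epsilon_I$: one has to verify that the map sending $\sigma+\epsilon_I$ to its restriction on $\Omega_I$ is a well defined group isomorphism $Z^2(G,I;\Z)\simeq\Z^{\Omega_I}$, and that the generators $\delta^1 h_g+\epsilon_I$ do produce $B^2(G,I;\Z)$ under this restriction. Unlike in the absolute case of \autoref{Thm:precohomology group}, no closed-form description of the torsion data is available (such as the identification $H^2(G,\Z)\simeq G/[G,G]$ used there), so the invariants $n_I$ and $\mu_1,\ldots,\mu_{n_I}$ appear abstractly in the statement.
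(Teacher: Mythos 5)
Your proposal is correct and matches the paper's proof essentially step for step: both use \autoref{Lem:characterization of precocycles and ideals} to identify $Z^2(G,I;A)\simeq A\otimes Z^2(G,I;\Z)$ with $Z^2(G,I;\Z)$ free of rank $m_I$, apply Smith's normalization to the inclusion $B^2(G,I;\Z)\subseteq Z^2(G,I;\Z)$, and pin down $B^2(G,I;A)$ via the canonical generating set $\{\delta^1 h_g+\epsilon_I\mid g\in G\}$. No substantive difference to report.
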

\begin{proof}
The result follows in a way similar to the proof of \autoref{Thm:precohomology group}, with the following minor modification.
In this case, using \autoref{Lem:characterization of precocycles and ideals}, one also has $Z^2(G,I;A)\simeq A\otimes Z^2(G,I;\Z)$.
Then, applying Smith's normalization theorem, one obtains a free basis $f_1,\ldots,f_{m_I}$ of $Z^2(G,I;\Z)$ together with positive integers $\mu_1,\ldots,\mu_{n_I}$ such that $\mu_1f_1,\ldots,\mu_{n_I}f_{n_I}$ is a free basis of $B^2(G,I;\Z)$.
Finally, the fact $B^2(G,I;A)=\mu_1Af_1\oplus\cdots\oplus\mu_{n_I}Af_{n_i}$ is proved considering the canonical generating set $\{\delta h_g+\epsilon_I\ |\ g\in G\}$.
\end{proof}
 In particular, when $A$ is a divisible group, then for every ideal $I$ in $\Lambda$ each summand $A/\mu_i A$ in the decomposition of $H^2(G,I;A)$ is trivial. This observation confirms the conjecture mentioned in the introduction:
\begin{corollary}\label{Cor:partial multiplier}
 If $G$ is finite and $\K$ is an algebraically closed field, then any component of the partial multiplier is a finite direct sum of copies of $\K^\times$.
\end{corollary}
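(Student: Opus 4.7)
The plan is to reduce the corollary to an immediate application of \autoref{Thm:precohomology group and ideals} by exploiting divisibility of $\K^\times$.

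First, by \autoref{Prop:partial multiplier and precohomology}, the partial multiplier $pM(G)$ over $\K$ coincides with $H^2(G,\Lambda;\K^\times)$, and its components are, by \autoref{Def:partial semilattice}, precisely the groups $H^2(G,I;\K^\times)$ as $I$ ranges over $\Lambda$. Since $G$ is finite, I would invoke \autoref{Thm:precohomology group and ideals} with $A=\K^\times$ to obtain the decomposition
\[H^2(G,I;\K^\times)\simeq (\K^\times)^{m_I-n_I}\oplus \K^\times/\mu_1\K^\times\oplus\cdots\oplus\K^\times/\mu_{n_I}\K^\times.\]

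The key observation is that the assumption that $\K$ is algebraically closed forces $\K^\times$ to be a divisible abelian group: for every $a\in\K^\times$ and every positive integer $n$, the polynomial $x^n-a\in\K[x]$ splits, hence admits a root in $\K^\times$, so the $n$-th power map on $\K^\times$ is surjective. Translated into the additive notation adopted in \autoref{Thm:precohomology group and ideals}, this says that $\mu_i\K^\times=\K^\times$ for each positive integer $\mu_i$, and consequently every torsion summand $\K^\times/\mu_i\K^\times$ in the decomposition above is trivial.

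Only the free part survives, giving $H^2(G,I;\K^\times)\simeq (\K^\times)^{m_I-n_I}$, which is a finite direct sum of copies of $\K^\times$, as claimed. I do not anticipate any substantial obstacle: the corollary is essentially a specialization of \autoref{Thm:precohomology group and ideals}, and the only genuinely geometric input is the elementary fact that algebraic closure of a field entails divisibility of its multiplicative group; this is precisely the remark made immediately before the statement of the corollary.
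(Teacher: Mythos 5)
Your proposal is correct and follows exactly the paper's own route: the corollary is obtained by specializing \autoref{Thm:precohomology group and ideals} to $A=\K^\times$ and observing that algebraic closure of $\K$ makes $\K^\times$ divisible, so every summand $\K^\times/\mu_i\K^\times$ vanishes. Nothing further is needed.
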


 \subsection{An example}
 It can be seen in the proof of \autoref{Thm:precohomology group} that the torsion part of $pH^2(G,\Z)$ comes from the classical cohomology group $H^2(G,\Z)$.
 A similar statement can not be formulated for a generic ideal $I$.
 Moreover, the next example shows that the torsion part of $H^2(G,I;\Z)$ is not necessarily a quotient of $H^2(G,\Z)$.
 The general procedure to compute $H^2(G,I;\Z)$ explicitly, which can be reproduced for various small groups using \texttt{GAP} \cite{GAP4}, is summarized as follows:
 one determines $m=|\Omega|$ together with a set of representatives $\Delta(\omega_1),\ldots,\Delta(\omega_m)$ for the classes of $\Omega$;
 the matrix \[M=(\delta^1h_{g_i}\circ\Delta(\omega_j))_{i,j}\in\mathrm{Mat}_{n\times m}(\Z)\]
 yields the coefficients of the natural basis $\delta^1h_{g_1},\ldots,\delta^1h_{g_n}$ of $B^2(G,\Z)$ in terms of the characteristic functions $h_{\omega_1},\ldots,h_{\omega_m}$;
given an ideal $I$ in $\Lambda$, one determines $\Omega\setminus\Omega_I=\{\omega_{i_1},\ldots,\omega_{i_{m-m_I}}\}$ to obtain, removing from $A$ the columns $i_1,\ldots,i_{m-m_I}$, a matrix $M_I$;
finally, the coefficients $\mu_1,\ldots,\mu_{n_I}$ which describe the torsion part of $H^2(G,I;\Z)$ in \autoref{Thm:precohomology group and ideals} are the non-zero diagonal entries of the Smith's normal form of the matrix $M_I$.
\begin{example}
\begin{figure}
\[\xymatrix@C=.3pc@R=3.5pc{
&&N_3:\Z\oplus \Z\oplus \Z_6\ar@{-}[d]\ar@{-}[dl]\ar@{-}[dr]\ar@{-}[drr]
&&\\
&\omega_5:\Z\oplus \Z_6\ar@{-}[d]\ar@{-}[dl]\ar@{-}[dr]
&\omega_6:\Z\oplus \Z_6\ar@{-}[dll]\ar@{-}[dr]\ar@{-}[drr]
&\omega_7:\Z\oplus \Z_6\ar@{-}[dll]\ar@{-}[d]\ar@{-}[drr]
&\omega_8:\Z\oplus \Z_6\ar@{-}[dll]\ar@{-}[d]\ar@{-}[dr]
&\\
\Z_6\ar@{-}[d]\ar@{-}[dr]
&\Z_6\ar@{-}[dl]\ar@{-}[dr]
&\Z\oplus\Z_2\ar@{-}[dl]\ar@{-}[d]
&\Z_2\oplus\Z_6\ar@{-}[dlll]\ar@{-}[d]\ar@{-}[dr]
&\Z_6\ar@{-}[dlll]\ar@{-}[d]
&\Z_6\ar@{-}[dlll]\ar@{-}[dl]
\\
\Z_6\ar@{-}[d]\ar@{-}[dr]\ar@{-}[drr]
&\Z_2\ar@{-}[d]
&\Z_2\ar@{-}[dl]
&\omega_4:\Z_6\ar@{-}[dl]\ar@{-}[dr]
&\Z_2\ar@{-}[d]\ar@{-}[dlll]
&\\
\omega_2:\Z_6\ar@{-}[d]\ar@{-}[drr]
&\Z_2\ar@{-}[d]
&\Z_3\ar@{-}[d]
&&0\ar@{-}[d]
&\\
\Z_2\ar@{-}[dr]
&\omega_3:\Z_2\ar@{-}[d]\ar@{-}[drr]
&\Z_3\ar@{-}[dd]
&&0\ar@{-}[dl]
\\
&\Z_2\ar@{-}[dr]
&&0\ar@{-}[dl]
\\
&&0
}\]
\caption{\label{Fig:lattice for Z6}
The partial cohomology semilattice of groups
$H^2(\Z_6,\Lambda;\Z)$.
The semilattice structure is the same of $(\Lambda,\cup)$.
At the vertex corresponding to the ideal $I$ appears $H^2(\Z_6,I;\Z)$.
The principal-modulo-$N_3$ ideals are marked by their generator, thus ``$\omega_5:\Z\oplus\Z_6$'' stands for
$H^2(G,\Pi_\sim(\omega_5);\Z)\simeq\Z\oplus\Z_6$.}
\end{figure}
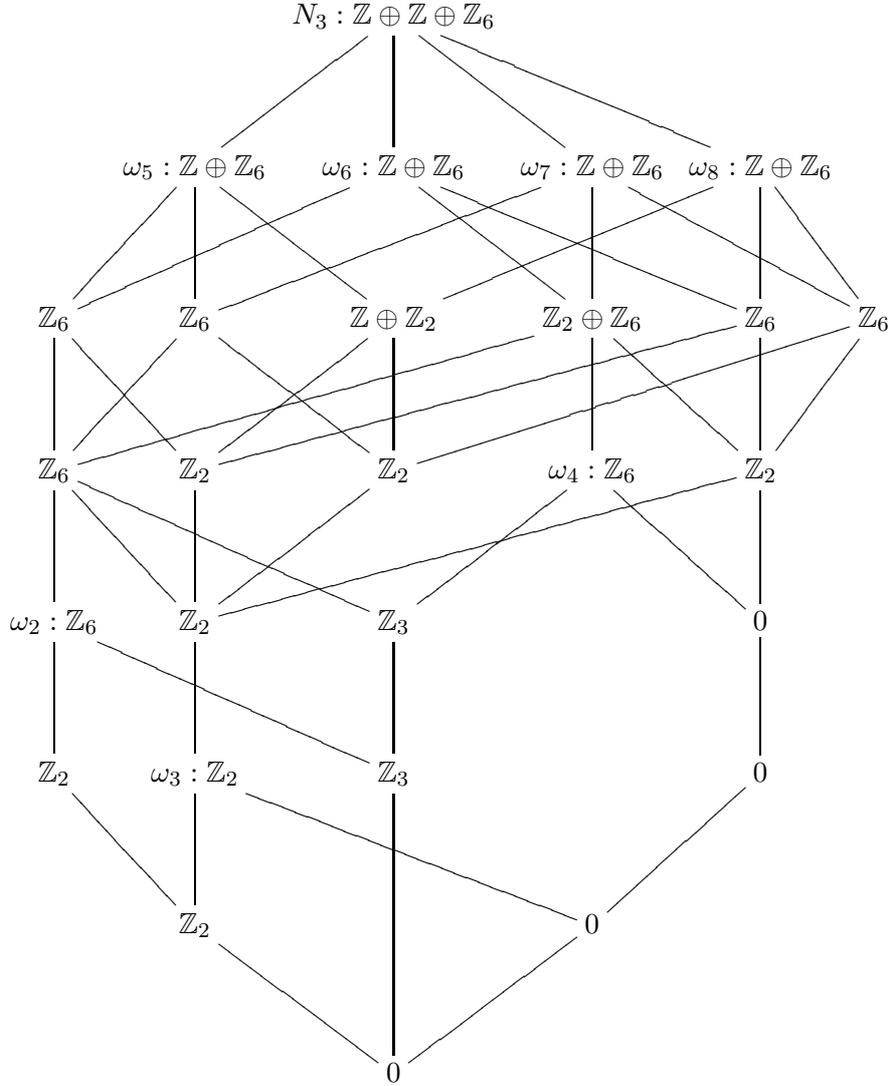
 Consider the cyclic group $G=\Z_6$. It is directly seen that $G$ admits $8=(6^2+2\cdot 2+3\cdot 1+5)/6$ classes in $\Omega$, which are represented by the following pairs:
  \[\begin{array}{r|cccccccc}
     &\omega_1&\omega_2&\omega_3&\omega_4&\omega_5&\omega_6&\omega_7&\omega_8\\\hline
     \Delta&(0,0)&(1,5)&(2,4)&(3,3)&(1,1)&(1,2)&(1,3)&(2,2)
    \end{array}\]
    Thus, the coefficients of the matrix $M=(\delta^1h_{g_i}\circ\Delta(\omega_j))_{i,j}\in\mathrm{Mat}_{6\times 8}(\Z)$ can be read in the following table:
  \[\begin{array}{r|rrrrrrrr}
     &\omega_1&\omega_2&\omega_3&\omega_4&\omega_5&\omega_6&\omega_7&\omega_8\\\hline
     \delta h_0&1&-1&-1&-1&\cdot&\cdot&\cdot&\cdot\\
     \delta h_1&\cdot&1&\cdot&\cdot&2&1&1&\cdot\\
     \delta h_2&\cdot&\cdot&1&\cdot&-1&1&\cdot&2\\
     \delta h_3&\cdot&\cdot&\cdot&2&\cdot&-1&1&\cdot\\
     \delta h_4&\cdot&\cdot&1&\cdot&\cdot&\cdot&-1&-1\\
     \delta h_5&\cdot&1&\cdot&\cdot&\cdot&\cdot&\cdot&\cdot
    \end{array}\]
It can be checked that the Smith's normalization algorithm provides two matrices
$P\in\GL(6,\Z)$ and $Q\in\GL(8,\Z)$, such that the diagonal entries of $D=PMQ$ are $\lambda_1=\cdots=\lambda_5=1$ and $\lambda_6=6$.
In accordance with \autoref{Thm:precohomology group}, then $pH^2(\Z_6,\Z)\simeq\Z\oplus\Z\oplus\Z_6$.

Consider now $I=N_3\cup\geni{\Pi(1,2),\Pi(1,3)}$, so that $\Omega\setminus\Omega_I=\{\omega_6,\omega_7\}$.
In particular $n_I=|\Omega_I|=6$, and Smith's normalization for $M_I$ provides a matrix with diagonal entries $\mu_1=\ldots=\mu_4=1$, $\mu_5=2$ and $\mu_6=6$. Therefore, $H^2(\Z_6,I;\Z)\simeq\Z_2\oplus\Z_6$ that, as anticipated, is not a quotient of $H^2(\Z_6,\Z)=\Z_6$.

For another instance, consider $I=N_3\cup\geni{\Pi(2,4),\Pi(3,3)}$.
Since, by \eqref{Eq:generation of ideals inverses}, $\geni{\Pi(2,4)}=\geni{\Pi(2)}=\geni{\Pi(4)}$ and $\geni{\Pi(3,3)}=\geni{\Pi(3)}$,
it immediately follows that $\Omega\setminus\Omega_I=\{\omega_3,\ldots,\omega_8\}$.
Thus, only the first and the second column in the above table are relevant, and one sees that $\delta^1h_5+\epsilon_I=h_{\omega_2}+\epsilon_I$ and $\delta^1(h_0+h_5)+\epsilon_I=h_{\omega_1}+\epsilon_I$.
Consequently, $H^2(G,I;\Z)=0$ in this case.

The full description of $H^2(\Z_6,\Lambda;\Z)$ is shown in \autoref{Fig:lattice for Z6}.
\end{example}

\section{Consistency with the previous notion}\label{Section:coherence}
It is left to show that for any abelian group of coefficients $A$ and any ideal $I$ in $\Lambda$, the group $H^2(G,I;A)$ is a partial cohomology group according to \cite{MR3312299}.
To this aim, one has to associate the pair $(I;A)$ with a unital partial $G$-module $B$ (that is to say, a commutative monoid together with a unital partial $G$-action).
\begin{theorem}\label{Thm:coherence with the previous notion}
For any pair $(I;A)$, there exists a unital partial $G$-module $B$ such that $H^2(G,I;A)\simeq H^2(G,B)$.
\end{theorem}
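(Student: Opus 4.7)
The plan is to realize $Z^2(G,I;A)$, as characterized in \autoref{Lem:characterization of precocycles and ideals}, as the $2$-cocycle group in the partial cohomology of a suitable unital partial $G$-module $B$. Let $E$ denote the semilattice of idempotents of the inverse semigroup $\Exel{G}/I$; recall that idempotents of $\Exel{G}$ are exactly the elements $(R,1)$ with $1\in R$, so $E$ is the quotient of this semilattice by those $(R,1)$ belonging to $I$, with a zero element adjoined. I would then take $B$ to be the semilattice of abelian groups over $E$ with all components equal to $A$: namely, the disjoint union $\bigsqcup_{e\in E}A_e$ with zero $\infty$ adjoined and multiplication $a_e\cdot b_f=(a+b)_{ef}$, interpreted as $\infty$ whenever $ef$ vanishes in $\Exel{G}/I$. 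The natural conjugation action of $G$ on $E$, sending $(R,1)\mapsto(gR,1)$ modulo $I$, lifts to a partial $G$-action on $B$ which permutes the components isomorphically and fixes each $a\in A$, producing a unital partial $G$-module in the sense of \cite{MR3312299}.

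Next I would identify partial $2$-cochains $f\colon G\times G\to B$ with functions $\sigma\in A_\infty^{G\times G}$ satisfying condition $i$ of \autoref{Lem:characterization of precocycles and ideals}. The support constraint on a $2$-cochain reads $f(g,h)=1_g\cdot 1_{gh}\cdot f(g,h)$, where $1_g$ denotes the idempotent $\Pi(g)\Pi(\inv{g})$ viewed in $E$; by the multiplication rule in $B$ this forces $f(g,h)=\infty$ precisely when $\Pi(g,h)\in I$, matching condition $i$. Then I would compute the partial coboundary of such an $f$: it specializes to the ordinary formula \eqref{Eq:defining cocycles} on those triples $(x,y,z)$ with $\Pi(x,y,z)\notin I$, because all relevant idempotents are nonzero in $E$ and the action is trivial on coefficients; and it is automatically $\infty$ otherwise, since at least one factor lies beneath a zero idempotent. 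Thus $Z^2(G,B)$ is identified with the group of functions satisfying both $i$ and $ii$, that is, with $Z^2(G,I;A)$.

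An analogous but simpler check at the level of $1$-cochains identifies $B^2(G,B)$ with $B^2(G,I;A)$, yielding $H^2(G,B)\simeq H^2(G,I;A)$. The main obstacle will be aligning the precise axioms of the partial cochain complex of \cite{MR3312299} with the combinatorial construction above. In particular, the idempotent restrictions appearing in the partial coboundary must produce exactly the annihilating factors dictated by membership in $I$ and no spurious ones, which requires careful bookkeeping of products and support idempotents in $\Exel{G}/I$; the identities \eqref{Eq:generation of ideals inverses} and \eqref{Eq:ideals and associativity}, together with the observation \eqref{Eq:1 notin xyzxyyzxyz} guaranteed by $N_3\subseteq I$, should be indispensable at this step.
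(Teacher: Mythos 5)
Your proposal is correct and follows essentially the same route as the paper: the module $B$ you build (a semilattice of copies of $A$ indexed by the nonzero idempotents of $\Exel{G}/I$, with $G$ acting partially by conjugation through $\Pi(g)$) is exactly the paper's $\left(A\times E(\Exel{G}\setminus I)\right)\cup\{\infty\}$, and the identification of partial cochains with maps $G^n\to A_\infty$ followed by an appeal to \autoref{Lem:characterization of precocycles and ideals} is the paper's argument verbatim. The only cosmetic difference is that the paper writes the conjugated idempotent as $\Pi(g)\varepsilon\Pi(\inv{g})$ rather than $(R,1)\mapsto(gR,1)$, which agree on the domain $B_{\inv{g}}$ where the action is actually defined.
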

\begin{proof}
 The construction is similar to that of \autoref{Lem:extension of E3 by a precocycle}:
 let $E(\Exel{G}\setminus I)$ denote the collection of the idempotents in $\Exel{G}\setminus I$, then the set
 \[B=\left(A\times E(\Exel{G}\setminus I)\right)\cup\{\infty\}\] endowed with zero element $\infty$ and multiplication
\[(a,\varepsilon)\cdot(b,\eta)=\left\{\begin{array}{cl}
        (a+b,\varepsilon\eta)&\mbox{ if }\varepsilon\eta\notin I\\
        \infty&\mbox{ otherwise,}
        \end{array}\right.\]
 results in an $A$-cancellative central extension of the semilattice $E(\Exel{G}/I)$.
 Moreover, $B$ is a commutative monoid with identity element $(0_A,\Pi(1))$, and it is naturally endowed with the following unital partial action:
 for any $g\in G$, one considers the idempotent
 \[e_g =\left\{\begin{array}{cl}
        (0_A,\Pi(g,\inv{g}))&\mbox{ if }\Pi(g)\notin I\\
        \infty&\mbox{ otherwise}
        \end{array}\right.\]
together with the ideal $B_g=Be_g$ of $B$, and the partial action $\vartheta$ is given by the family of isomorphisms $\{\vartheta_g:B_{\inv{g}}\to B_g\ |\ g\in G\}$, defined by:
 \[\vartheta_g(a,\varepsilon)
 =\left\{\begin{array}{cl}
        (a,\Pi(g)\varepsilon\Pi(\inv{g}))&\mbox{ if }\Pi(g)\varepsilon\Pi(\inv{g})\notin I\\
        \infty&\mbox{ otherwise.}
        \end{array}\right.\]
Thus, for any $x_1,\ldots,x_n\in G$, the ideal $B_{(x_1,\ldots,x_n)}=B_{x_1}B_{x_1x_2}\cdots B_{x_1x_2\cdots x_n}$ is either reduced to the trivial semigroup $\{\infty\}$, or it is generated by the idempotent
\[e_{x_1}e_{x_1x_2}\cdots e_{x_1\cdots x_n}=\left(0_A,\Pi(x_1,x_2,\ldots,x_n,\inv{(x_1x_2\cdots x_n)})\right)\ ,\]
which obviously plays the role of the identity element.
In the latter case, the subgroup of $B_{(x_1,\ldots,x_n)}$ consisting of invertible elements is
\[\mathcal{U}(B_{(x_1,\ldots,x_n)})=\left\{\left(a,\Pi(x_1,x_2,\ldots,x_n,\inv{(x_1x_2\cdots x_n)})\right)\ |\ a\in A\right\}\ ,\]
which is isomorphic to $A$ by means of the projection on the first component $\pi:B\setminus\{\infty\}\to A$.
It is convenient to extend $\pi$ to the whole $B$ by the assignment of $\pi(\infty)=\infty$.
Therefore, an $n$-cochain $\sigma\in C^n(G,B)$, which is by definition a map $\sigma:G^n\to B$ satisfying
$\sigma(x_1,\ldots,x_n)\in\mathcal{U}(B_{(x_1,\ldots,x_n)})$ for every $x_1,\ldots,x_n\in G$, can be identified with the composite function $\bar\sigma=\pi\circ\sigma:G^n\to A_\infty$.
Clearly, one has that $\bar\sigma(x_1,\ldots,x_n)=\infty$ if and only if $\Pi(x_1,\ldots,x_n,\inv{(x_1\ldots x_n)})\in I$, which indeed corresponds to the case $B_{(x_1,\ldots,x_n)}=\{\infty\}$.
By the above construction of the partial action of $G$ over $B$, the coboundary map $\delta^n:C^n(G,B)\to C^{n+1}(G,B)$ is given by the relation
\[\delta^n\sigma(x_1,\ldots,x_{n+1})=\left\{\begin{array}{cl}
        (\delta^n\bar\sigma(x_1,\ldots,x_{n+1}),\eta)&\mbox{ if }\eta\notin I\\
        \infty&\mbox{ otherwise,}
        \end{array}\right.\]
where \[\eta=\Pi(x_1,\ldots,x_{n+1},\inv{(x_1\cdots x_{n+1})})\] and $\delta^n:C^n(G,A)\to C^{n+1}(G,A)$ is the standard coboundary homomorphism relative to the the trivial (global) $G$-module $A$.
Notice that, in view of \eqref{Eq:ideals and associativity}, the condition $\eta\notin I$ is equivalent to  $\Pi(x_1,\ldots,x_{n+1})\notin I$.
Therefore, taking $n=2$, \autoref{Lem:characterization of precocycles and ideals} shows that $\sigma\in Z^2(G,B)$ if and only if $\bar\sigma\in Z^2(G,I;A)$.
Finally, one checks the case $n=1$ to conclude that $\sigma\in B^2(G,B)$ if and only if $\bar\sigma\in B^2(G,I;A)$, so that the map $\sigma\to\bar\sigma$ induces the desired isomorphism of cohomology groups.
\end{proof}



\end{document}